\newcommand{\blue}[1]{\color{black}{#1}\color{black}\phantom{}}
\renewcommand{\det}{\text{ \normalfont{det}}}
\newtheorem{theorem}{Theorem}
\newtheorem{lemma}{Lemma}
\newtheorem{proposition}{Proposition}
\let\NAT@parse\undefined
\title{Accelerated Alternating Direction Method of Multipliers  Gradient Tracking for Distributed Optimization} 
\author{Eduardo Sebastián, Mauro Franceschelli, Andrea Gasparri, Eduardo Montijano and Carlos Sagüés
\thanks{This work was supported by the Spanish projects PID2021-125514NB-I00, PID2021-124137OB-I00, TED2021-130224B-I00
funded by MCIN/AEI/10.13039/501100011033, by ERDF A way of making Europe and by the
European Union NextGenerationEU/PRTR, by the Gobierno de Aragón under Project DGA T45-23R, and by Spanish grant FPU19-05700.}%
\thanks{E. Sebastián, E. Montijano and C. Sagüés are with the Instituto de Investigaci\'on en Ingenier\'ia de Arag\'on, Universidad de Zaragoza, Spain 
(email:\texttt{\scriptsize esebastian@unizar.es, emonti@unizar.es, csagues@unizar.es}). M. Franceschelli is with the Department of Electrical and Electronic
Engineering, University of Cagliari, Italy (email: \texttt{\scriptsize mauro.franceschelli@unica.it}). A. Gasparri is with the Department of Engineering, Roma Tre University, Italy (email:\texttt{\scriptsize andrea.gasparri@uniroma3.it}).}
}
\newcommand\copyrighttext{%
  \footnotesize \textcopyright This paper has been accepted for publication at IEEE Control Systems Letters. Please, when citing the paper, refer to the official manuscript published by the IEEE Control Systems Letters.}
\newcommand\copyrightnotice{%
\begin{tikzpicture}[remember picture,overlay]
\node[anchor=south,yshift=10pt] at (current page.south) {\fbox{\parbox{\dimexpr\textwidth-\fboxsep-\fboxrule\relax}{\copyrighttext}}};
\end{tikzpicture}%
}
\begin{document}

\maketitle
\copyrightnotice
\thispagestyle{empty}

\begin{abstract}
This paper presents a novel accelerated distributed algorithm for \blue{unconstrained} consensus optimization \blue{over static undirected networks}. The proposed algorithm combines the benefits of acceleration from momentum, the robustness of the alternating direction method of multipliers, and the computational efficiency of gradient tracking to surpass existing state-of-the-art methods in convergence speed, while preserving their computational and communication cost. First, we prove that, by applying momentum on the average dynamic consensus protocol over the estimates and gradient, we can study the algorithm as an interconnection of two singularly perturbed systems: the outer system connects the consensus variables and the optimization variables, and the inner system connects the estimates of the optimum and the auxiliary optimization variables. Next, we prove that, by adding momentum to the auxiliary dynamics, our algorithm always achieves faster convergence than the achievable linear convergence rate for the non-accelerated alternating direction method of multipliers gradient tracking algorithm case. Through simulations, we numerically show that our accelerated algorithm
\blue{surpasses the existing accelerated and non-accelerated 
distributed consensus first-order optimization protocols in convergence speed}.
\end{abstract}

\begin{IEEEkeywords}
Optimization algorithms, distributed control, cooperative control
\end{IEEEkeywords}

%===============================================================================

\section{Introduction}\label{sec:intro}
\IEEEPARstart{D}{istributed} optimization refers to the problem of finding the global optimum of an optimization problem where the cost function, the constraints or the available information is distributed over a network \cite{nedic2018network}. Of particular interest is the case of unconstrained consensus optimization \cite{nedic2018distributed}, where the global cost function is the sum of local cost functions. Practical instances of such a setting are federated learning \cite{nedic2020distributed}, networked games \cite{parise2019variational} or multi-robot control \cite{shorinwa2024distributed}. In all these applications, it is of key importance to reconstruct the global optimum as fast as possible to reduce the number of gradient computations, the communication efforts and adapt to evolving environments. 
To cope with this, we propose a novel accelerated distributed optimization algorithm that exploits momentum to \blue{outperform existing first order approaches in convergence speed}, while leveraging the robustness of the alternating direction method of multipliers (ADMM) and the computational simplicity of gradient tracking (GT).

The evolution of distributed optimization algorithms can be divided in three stages. The first approaches departed from subgradient \cite{sundhar2010distributed}, gradient \cite{jakovetic2014fast} and proximal methods \cite{bertsekas2011incremental} to derive distributed versions that achieve convergence to the optimum if and only if the step size diminishes with time. This property poses a fundamental trade-off between speed and accuracy because fixed step sizes lead to convergence within a given distance from the optimum. 

To overcome the speed-accuracy trade-off, several works capitalized from different points of view the properties of dynamic average consensus \cite{franceschelli2019multi, sebastian2023accelerated} and splitting methods to develop GT \blue{\cite{shi2015extra,qu2017harnessing,nedic2017achieving,xin2019distributed}} and ADMM or primal/dual methods \cite{wei20131,bastianello2020asynchronous,8472154} for distributed optimization. In both cases, linear convergence is achieved for constant step sizes; however, these approaches suffer from their respective caveats. GT is simple to compute but is inherently non-robust against non-ideal affections such as initialization errors, gradient noise or communication disturbances \blue{\cite{bin2022stability}}. ADMM is robust against those undesirable effects but, as a proximal method, it has a non-negligible computational cost at each iteration \blue{\cite{bastianello2020asynchronous}}. In both cases, due to the importance of convergence speed, different works have exploited acceleration methods to improve convergence rate such as momentum \blue{\cite{nguyen2023accelerated,huang2024accelerated}}, Nesterov acceleration \cite{qu2019accelerated}, \blue{Anderson acceleration and adaptive preconditioning for proximal-based optimization such as ADMM and Douglas-Rachford Splitting \cite{fu2020anderson, adil2021rapid}} or smooth/non-smooth regularizers \cite{li2019decentralized}. \blue{Many of the aforementioned methods work for time-varying and/or directed graphs, whereas in this paper we focus on the static undirected case.}

Recently, a distributed optimization protocol that combines ADMM and GT \cite{carnevale2023distributed,carnevale2023admm} was proposed as an alternative to combine the benefits of both approaches. The key idea is that GT can be reformulated in ADMM form, where the ADMM cost function is quadratic in the optimization variables. Therefore, it is possible to obtain a closed analytical solution that is cheap to compute and robust. Nevertheless, despite proving linear convergence for strongly convex cost functions, the algorithm needs time-scale decoupling between the optimum estimate and auxiliary dynamics, limiting the achievable convergence speed. 

To overcome such limitation, \textbf{our main contribution} (Section \ref{sec:proposal}) is a novel accelerated distributed optimization method \blue{for static undirected networks} based on ADMM and GT which exploits momentum  \blue{(for details about the concept of momentum, see \cite{ghosh1996first})}. Momentum allows to accelerate the convergence of tracking average consensus variables \blue{(e.g., \cite{sebastian2023accelerated})} over the estimates and gradients, and speed up the convergence speed of the auxiliary variables' dynamics. As a consequence, we can 
study the convergence properties of our protocol and show that it always achieves linear convergence to the global optimum. More importantly, we also prove that, by adding momentum, acceleration is guaranteed compared to the same algorithm without momentum. We empirically evaluate our proposed algorithm (Section \ref{sec:experiments}) against existing state-of-the-art algorithms, \blue{showing that ours surpasses all of them in convergence speed}, using the same computation-communication resources \blue{while increasing the memory burden}.

\section{Preliminaries}\label{sec:problem_formulation}
\subsection{Problem Formulation}\label{subsec:problem_formulation}

A network of $\mathsf{N} > 0$ nodes\footnote{\blue{\textbf{Notation.} Let $\mathbb{N}$ and $\mathbb{R}$ be the natural and real number sets. We use $\mathsf{car}(\bullet)$ to denote the cardinality of a set. We define as $\mathbf{I}_p$, $\mathbf{0}_{p\times k}$, $\mathbf{1}_{p\times k}$ the identity matrix of dimension $p$, the $p \times k$ matrix of zeros and the $p \times k$ matrix of ones respectively. Besides, $\mathbf{1}_{N, p} = \mathbf{1}_{N} \otimes \mathbf{I}_p$ where $\otimes$ denotes the Kronecker product. Let $\sigma_i(\bullet)$ be the $i$-th eigenvalue of a matrix and $\det(\bullet)$ its determinant. Let $||\bullet||$ be the L2-norm. We denote by $\mathsf{diag}(\{\mathbf{E}_i\}_{i=1}^N)$ the block diagonal matrix whose $i-$th block element matrix is $\mathbf{E}_i$.}} cooperates to find the global optimum $\mathbf{x}^* \in \mathbb{R}^n$ of the consensus optimization problem
\begin{equation}\label{eq:original}
    \mathbf{x}^* = \arg \min_{\mathbf{x} \in \mathbb{R}^n} \sum_{i=1}^{\mathsf{N}} f_i(\mathbf{x}),
\end{equation}
where $\mathbf{x} \in \mathbb{R}^n$ is the $n-$dimensional decision variable. Each node $i \in \mathcal{V} = \{1, \dots, \mathsf{N}\}$ only has access to its local objective function $f_i(\bullet)$ and the local estimate of global optimum $\mathbf{x}_i^t$, with $t \in \mathbb{N}$ denoting the discrete time steps. Problem \eqref{eq:original} is reformulated in distributed form by including the consensus constraint as follows
\begin{equation}\label{eq:original_2}
    \mathbf{x}^* = \arg \min_{\mathbf{x}_i \in \mathbb{R}^n} \sum_{i=1}^{\mathsf{N}} f_i(\mathbf{x}_i) \quad \text{ s.t. } \mathbf{x}_i = \mathbf{x}_j \quad \forall i,j \in \mathcal{V}.
\end{equation}
We assume that $f_i(\bullet)$ is \textbf{c}-strongly convex and the gradients $\nabla f_i(\mathbf{x})$ are L-Lipschitz continuous $\forall i \in \mathcal{V}$. This assumption guarantees that $\mathbf{x}^*$ is unique. 

The network of nodes is described by an undirected connected graph $\mathcal{G} = \{\mathcal{V}, \mathcal{E}\}$, where  $\mathcal{E} \subseteq \mathcal{V} \times \mathcal{V}$ is the set of edges of the graph, such that $(i, j) \in \mathcal{E}$ means that nodes $i$ and $j$ can communicate with each other. The set of neighbors of node $i$ is $\mathcal{N}_i = \{j | (i,j) \in \mathcal{E}\}$. Note that $i \notin \mathcal{N}_i$. The degree of node $i$ is $d_i = \mathsf{car}(\mathcal{N}_i)$ and $d = \sum_{i=1}^{\mathsf{N}}d_i$.

The goal of this work is to develop a distributed algorithm such that each local estimate $\mathbf{x}_i^t$ linearly converges to the global optimum $\mathbf{x}^*$, respecting the topology structure of $\mathcal{G}$.

\subsection{Singularly Perturbed Systems}\label{subsec:sps}

Some of the mathematical derivations exposed in this work are based on  singularly perturbed systems theory \cite{carnevale2022tracking}. Singularly perturbed systems are dynamical systems composed by two interconnected dynamics, each of them evolving at different time-scales. Typically, the slow dynamics corresponds to the desired states to be controlled (e.g., the estimates $\mathbf{x}_i^t$) while the fast dynamics corresponds to auxiliary variables. To ensure stability, the speed of the slow system is constrained by the speed of the fast dynamics. For clarity, we reproduce a theorem that establishes that, for a sufficiently slow dynamics, the slow system \eqref{eq:slow} is decoupled from the fast system \eqref{eq:fast} and they are both globally exponentially stable. We will use this fact to prove convergence properties of our algorithm.
\begin{theorem}[Theorem II.3, \cite{carnevale2023admm}]\label{th:sps}
    Let
    \begin{subequations}
    \begin{alignat}{2}
        \blue{\blue{\mathbf{p}}^{t+1}} =& \blue{\blue{\mathbf{p}}^t} + \gamma f(\blue{\mathbf{p}}^t, \blue{\mathbf{q}}^t, t)\label{eq:slow}
        \\
        \blue{\mathbf{q}^{t+1}} =& g(\blue{\mathbf{q}}^t, \blue{\mathbf{p}}^t, t),\label{eq:fast}
    \end{alignat}
    \end{subequations}
    with $\blue{\blue{\mathbf{p}}} \in \mathbb{R}^n$, $\blue{\mathbf{q}} \in \mathbb{R}^m$,  $\gamma > 0$ and $f(\bullet), g(\bullet)$ Lipschitz continuous uniformly over $t$. Let assume that there exists a function $\blue{\mathbf{q}_{\mathsf{eq}}}(\mathbf{p})$ that is Lipschitz continuous such that
\begin{subequations}\label{eq:eq_cond}
    % \vspace{-0.4cm}
    \begin{alignat}{2}
        \mathbf{0}_n =& \gamma f(\mathbf{0}_n,\blue{\mathbf{q}_{\mathsf{eq}}}(\mathbf{0}_n), t)
        \\
        \blue{\mathbf{q}_{\mathsf{eq}}}(\mathbf{p}) =& g(\blue{\mathbf{q}_{\mathsf{eq}}}(\mathbf{p}), \blue{\mathbf{p}}, t).
    \end{alignat}
    \end{subequations}
    We call reduced system the system given by
    \begin{equation}
        \blue{\blue{\mathbf{p}}^{t+1}} = \blue{\blue{\mathbf{p}}^t} + \gamma f(\blue{\blue{\mathbf{p}}^t}, \blue{\mathbf{q}_{\mathsf{eq}}}(\mathbf{p}^t), t)
    \end{equation}
    and boundary system layer with $\blue{\tilde{\mathbf{q}}} \in \mathbb{R}^m$ the system 
    \begin{equation}
        \blue{\blue{\tilde{\mathbf{q}}}^{t+1}} = g(\blue{\blue{\tilde{\mathbf{q}}}^{t}} + \blue{\mathbf{q}_{\mathsf{eq}}}(\mathbf{p}^t), \blue{\mathbf{p}}^t, t) - \blue{\mathbf{q}_{\mathsf{eq}}}(\mathbf{p}^t).
    \end{equation}
    Assume that there exist continuous function $u(\bullet)$
    and gain $\bar{\gamma}_1 > 0$ such that, for any $\gamma \in (0, \bar{\gamma}_1)$, there exist $b_1, b_2, b_3, b_4>0$ \blue{and values $\tilde{\mathbf{q}}, \tilde{\mathbf{q}}_1, \tilde{\mathbf{q}}_2 \in \mathbb{R}^m$} such that
    \begin{subequations}
    \begin{alignat}{2}
        \label{eq:cond1}
        & \hspace{-0.3cm}b_1 ||\blue{\tilde{\mathbf{q}}}||^2 \leq u(\blue{\tilde{\mathbf{q}}}, t) \leq b_2 ||\blue{\tilde{\mathbf{q}}}||^2
        \\
        \label{eq:cond3}
        & \hspace{-0.3cm}u(\blue{\blue{\tilde{\mathbf{q}}}^{t+1}}, t+1) - u(\blue{\blue{\tilde{\mathbf{q}}}^{t}}, t) \leq -b_3 ||\blue{\tilde{\mathbf{q}}}||^2
        \\
        \label{eq:cond5}
        & \hspace{-0.3cm}|u(\blue{\tilde{\mathbf{q}}}_{1}, t) \kern -0.1cm - \kern -0.1cm  u(\blue{\tilde{\mathbf{q}}}_{2}, t)| \kern -0.1cm  \leq \kern -0.1cm  b_4 ||\blue{\tilde{\mathbf{q}}}_{1} \kern -0.1cm  - \kern -0.1cm \blue{\tilde{\mathbf{q}}}_{2}|| (||\blue{\tilde{\mathbf{q}}}_{1}|| \kern -0.1cm + \kern -0.1cm ||\blue{\tilde{\mathbf{q}}}_{2}||).
    \end{alignat}
    \end{subequations}
    Also, assume that there exists a continuous function $w(\bullet)$, and gain $\bar{\gamma}_2 > 0$ such that, for any $\gamma \in (0, \bar{\gamma}_2)$, there exist $c_1, c_2, c_3, c_4 >0$ \blue{and values ${\mathbf{p}}, {\mathbf{p}}_1, {\mathbf{p}}_2 \in \mathbb{R}^n$} such that
    \begin{subequations}
    \begin{alignat}{2}
        \label{eq:cond2}
        & \hspace{-0.4cm}c_1 ||{\blue{\mathbf{p}}}||^2 \leq w({\blue{\mathbf{p}}}, t) \leq c_2 ||{\blue{\mathbf{p}}}||^2
        \\
        \label{eq:cond4}
        & \hspace{-0.4cm}w({\blue{\mathbf{p}}}^{t+1}, t+1) - w({\blue{\mathbf{p}}}^{t}, t) \leq -c_3 ||{\blue{\mathbf{p}}}||^2
        \\
        \label{eq:cond6}
        & \hspace{-0.4cm}|w({\blue{\mathbf{p}}_{1}}, t) \kern -0.1cm- \kern -0.1cmw({\blue{\mathbf{p}}_{2}}, t)| \kern -0.1cm\leq\kern -0.1cm c_4 ||{\blue{\mathbf{p}}_{1}} \kern -0.1cm- \kern -0.1cm{\blue{\mathbf{p}}_{2}}|| (||{\blue{\mathbf{p}}_{1}}||\kern -0.1cm + \kern -0.1cm||{\blue{\mathbf{p}}_{2}}||).
    \end{alignat}
    \end{subequations}
    Then, there exists $\bar{\gamma} \in (0, \min(\bar{\gamma}_1, \bar{\gamma}_2)) $ and gains \mbox{$\kappa_1, \kappa_2 > 0$} such that, for any ${\gamma} \in (0, \bar{\gamma}) $ and $(\blue{\mathbf{p}}^0, \mathbf{q}^0)$, it holds that
    \begin{equation}
        \left|\left|\begin{pmatrix}
            \blue{\blue{\mathbf{p}}^t} - \blue{\mathbf{p}}^*
            \\
            \blue{\mathbf{q}^t} - \blue{\mathbf{q}_{\mathsf{eq}}}(\blue{\blue{\mathbf{p}}^t})
        \end{pmatrix}  \right|\right| \leq \kappa_1 \left|\left|\begin{pmatrix}
            \blue{\mathbf{p}}^0 
            \\
            \blue{\mathbf{q}}^0 - \blue{\mathbf{q}_{\mathsf{eq}}}(\blue{\mathbf{p}}^0)
        \end{pmatrix}  \right|\right| e^{-\kappa_2 t},
    \end{equation}
    where $\blue{\mathbf{p}}^* \in \mathbb{R}^n$ is the desired equilibrium of $\blue{\mathbf{p}}$. 
\end{theorem}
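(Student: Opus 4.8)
The plan is to build a weighted composite Lyapunov function for the interconnection and show it decays geometrically; after a change of coordinates this is exactly the claimed bound. First I would pass to the error variables $\bar{\mathbf{p}}^t := \mathbf{p}^t - \mathbf{p}^*$ and $\tilde{\mathbf{q}}^t := \mathbf{q}^t - \mathbf{q}_{\mathsf{eq}}(\mathbf{p}^t)$, so that the stated inequality becomes exponential stability (uniformly over $t$) of the origin of the $(\bar{\mathbf{p}},\tilde{\mathbf{q}})$-system. Using \eqref{eq:eq_cond}, in these coordinates the $\bar{\mathbf{p}}$-update is the reduced-system update plus a term that vanishes at $\tilde{\mathbf{q}}^t=\mathbf{0}_m$ and is $O(\gamma\|\tilde{\mathbf{q}}^t\|)$ by Lipschitz continuity of $f$; and, since $g(\mathbf{q}^t,\mathbf{p}^t,t)$ appears verbatim in both the true and the boundary-layer dynamics, the $\tilde{\mathbf{q}}$-update equals the boundary-layer update shifted by $\mathbf{q}_{\mathsf{eq}}(\mathbf{p}^t)-\mathbf{q}_{\mathsf{eq}}(\mathbf{p}^{t+1})$, which is $O(\gamma)(\|\bar{\mathbf{p}}^t\|+\|\tilde{\mathbf{q}}^t\|)$ because $\mathbf{p}^{t+1}-\mathbf{p}^t=\gamma f(\cdot)$ and $\mathbf{q}_{\mathsf{eq}}$ is Lipschitz.

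Next I would take the candidate
\begin{equation}
    V(\bar{\mathbf{p}},\tilde{\mathbf{q}},t) = (1-\theta)\,w(\bar{\mathbf{p}},t) + \theta\,u(\tilde{\mathbf{q}},t), \qquad \theta\in(0,1),
\end{equation}
which by \eqref{eq:cond1} and \eqref{eq:cond2} satisfies $\underline{a}\,(\|\bar{\mathbf{p}}\|^2+\|\tilde{\mathbf{q}}\|^2)\le V\le \overline{a}\,(\|\bar{\mathbf{p}}\|^2+\|\tilde{\mathbf{q}}\|^2)$ with $\underline{a}:=\min\{(1-\theta)c_1,\theta b_1\}$ and $\overline{a}:=\max\{(1-\theta)c_2,\theta b_2\}$. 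For the increment I would write $w^{t+1}-w^t$ along the true trajectory as the reduced-system decrease \eqref{eq:cond4} plus the discrepancy caused by evaluating $f$ at $\mathbf{q}^t$ rather than $\mathbf{q}_{\mathsf{eq}}(\mathbf{p}^t)$; by \eqref{eq:cond6}, Lipschitzness of $f$, and the bound $\|\bar{\mathbf{p}}^{t+1}\|+\|\bar{\mathbf{p}}^t\|\le C(\|\bar{\mathbf{p}}^t\|+\|\tilde{\mathbf{q}}^t\|)$ valid for small $\gamma$, this discrepancy is at most $\gamma\,\kappa\,\|\tilde{\mathbf{q}}^t\|(\|\bar{\mathbf{p}}^t\|+\|\tilde{\mathbf{q}}^t\|)$. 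Similarly $u^{t+1}-u^t$ along the true trajectory equals the boundary-layer decrease \eqref{eq:cond3} plus the discrepancy from the $O(\gamma)$ drift of the centering point $\mathbf{q}_{\mathsf{eq}}(\mathbf{p}^t)$, which by \eqref{eq:cond5} is at most $\gamma\,\kappa'\,(\|\bar{\mathbf{p}}^t\|+\|\tilde{\mathbf{q}}^t\|)^2$. Collecting terms yields
\begin{equation}
    V^{t+1}-V^t \le -\begin{pmatrix}\|\bar{\mathbf{p}}^t\| & \|\tilde{\mathbf{q}}^t\|\end{pmatrix}\mathbf{M}(\theta,\gamma)\begin{pmatrix}\|\bar{\mathbf{p}}^t\| \\ \|\tilde{\mathbf{q}}^t\|\end{pmatrix},
\end{equation}
where $\mathbf{M}(\theta,\gamma)$ is symmetric with diagonal entries $(1-\theta)c_3-O(\gamma)$ and $\theta b_3-O(\gamma)$ and off-diagonal entry $O(\gamma)$.

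Finally I would check that $\mathbf{M}(\theta,\gamma)$ can be made uniformly positive definite: fixing any $\theta\in(0,1)$, as $\gamma\to 0$ its diagonal entries tend to the strictly positive $(1-\theta)c_3$, $\theta b_3$ and its off-diagonal entry tends to $0$, so by continuity there is $\bar{\gamma}\le\min(\bar{\gamma}_1,\bar{\gamma}_2)$ with $\mathbf{M}(\theta,\gamma)\succeq\lambda\mathbf{I}_2$, $\lambda>0$, for all $\gamma\in(0,\bar{\gamma})$. Then $V^{t+1}-V^t\le-\lambda(\|\bar{\mathbf{p}}^t\|^2+\|\tilde{\mathbf{q}}^t\|^2)\le-(\lambda/\overline{a})\,V^t$, and after shrinking $\bar{\gamma}$ so that $\rho:=\lambda/\overline{a}\in(0,1)$ one gets $V^t\le(1-\rho)^tV^0$; combining with the quadratic sandwich and $\|(\bar{\mathbf{p}}^t,\tilde{\mathbf{q}}^t)\|^2=\|\bar{\mathbf{p}}^t\|^2+\|\tilde{\mathbf{q}}^t\|^2$ gives the stated bound with $\kappa_1=\sqrt{\overline{a}/\underline{a}}$ and $\kappa_2=-\tfrac12\ln(1-\rho)$, and undoing the coordinate change restores $\mathbf{p}^t-\mathbf{p}^*$ and $\mathbf{q}^t-\mathbf{q}_{\mathsf{eq}}(\mathbf{p}^t)$. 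I expect the main obstacle to be the cross-term bookkeeping: isolating the precise $O(\gamma)$ size of the perturbations to $w$ and $u$ — in particular propagating that the boundary-layer reference $\mathbf{q}_{\mathsf{eq}}(\mathbf{p}^t)$ drifts only $O(\gamma)$ per step using \eqref{eq:cond5}, the Lipschitz constant of $\mathbf{q}_{\mathsf{eq}}$, and boundedness of $f$ along trajectories — and then exhibiting a single $\theta$ together with a genuinely non-empty interval $(0,\bar{\gamma})$ on which the $2\times 2$ form is negative definite uniformly in $t$.
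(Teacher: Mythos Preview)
The paper does not prove this theorem: it is reproduced from \cite{carnevale2023admm} (Theorem~II.3 there) as a black-box tool, and the only commentary is the sentence after the statement explaining the role of conditions \eqref{eq:cond1}--\eqref{eq:cond6}. So there is no in-paper proof to compare against.

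That said, your proposal is the standard composite-Lyapunov argument for discrete-time singularly perturbed systems and is how results of this type are established in the literature your theorem is drawn from (e.g., \cite{carnevale2022tracking,carnevale2023admm}): pass to the error coordinates $(\bar{\mathbf p},\tilde{\mathbf q})$, split each Lyapunov increment into the nominal decrease \eqref{eq:cond3}/\eqref{eq:cond4} plus an $O(\gamma)$ cross-perturbation bounded via \eqref{eq:cond5}/\eqref{eq:cond6} and Lipschitzness, and choose $\gamma$ small so the resulting $2\times 2$ form is negative definite. The outline is correct.

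One caveat worth tightening in your final step: as stated, the theorem allows the constants $c_1,\dots,c_4$ to depend on $\gamma$ (``for any $\gamma\in(0,\bar\gamma_2)$, there exist $c_1,\dots$''). In the intended applications the reduced-system decrease $c_3$ typically scales like $\gamma$, so the $(1,1)$ entry of your matrix $\mathbf M(\theta,\gamma)$ is $O(\gamma)$ rather than a $\gamma$-independent positive constant. Your heuristic ``diagonals tend to $(1-\theta)c_3$, $\theta b_3$ and off-diagonal tends to $0$'' then needs to be replaced by the determinant test $\det\mathbf M(\theta,\gamma)>0$, which still goes through because $O(\gamma)\cdot\Theta(1)>O(\gamma^2)$ for small $\gamma$. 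This is precisely the cross-term bookkeeping you already flag as the main obstacle; just be aware that the limit $\gamma\to 0$ does not by itself make the first diagonal entry strictly positive.
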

\blue{Conditions \eqref{eq:cond1}, \eqref{eq:cond2}, \eqref{eq:cond3}, \eqref{eq:cond4} come from standard Lyapunov stability arguments, whereas \eqref{eq:cond5} and \eqref{eq:cond6} bound the changes in the Lyapunov functions $w(\mathbf{p},t)$ and $u(\tilde{\mathbf{q}},t)$ to apply Lyapunov exponential stability arguments.}

\subsection{ADMM Gradient Tracking}

As introduced in Section \ref{sec:intro}, a recent paper \cite{carnevale2023admm} proposes a first order distributed optimization algorithm that combines ADMM and GT to obtain a robust linearly convergent algorithm. Algorithm \ref{al:original} presents such scheme.
\begin{algorithm}
\caption{Original \cite{carnevale2023admm} \textsf{ADMM-GT} at node $i$}\label{al:original}
\begin{algorithmic}[1]
\STATE \textbf{State of agent}: $\mathbf{x}^{0}_i \in \mathbb{R}^n$,  ${z}^0_{ij} \in \mathbb{R}^{2n} \quad \forall j \in \mathcal{N}_i$
\STATE \textbf{Parameters}: $\gamma  > 0$, $\rho > 0$, $\alpha \in (0,1)$
\FOR{$t=1,\hdots$}
    \STATE $\mathbf{y}^{t+1}_i = \frac{1}{1 + \rho d_i}(\mathbf{x}_i^t + \begin{pmatrix}
        \mathbf{I}_n & \mathbf{0}_{n\times n}
    \end{pmatrix} \sum_{j\in\mathcal{N}_i}\mathbf{z}_{ij}^t)$
    \STATE $\mathbf{s}^{t+1}_i = \frac{1}{1 + \rho d_i}(\nabla f_i(\mathbf{x}_i^t) + \begin{pmatrix}
        \mathbf{0}_{n\times n} &
        \mathbf{I}_n 
    \end{pmatrix} \sum_{j\in\mathcal{N}_i}\mathbf{z}_{ij}^t)$
    \STATE $\mathbf{z}_{ij}^{t+1} = (1-\alpha)\mathbf{z}_{ij}^t - \alpha (\mathbf{z}_{ji}^t - 2 \rho \begin{pmatrix}
        \mathbf{y}^{t+1}_i \\
        \mathbf{s}^{t+1}_i 
    \end{pmatrix} ) \quad \forall j \in \mathcal{N}_i$ 
    \STATE $\mathbf{x}_i^{t+1} = \mathbf{x}_i^t + \gamma(\mathbf{y}^{t+1}_i - \mathbf{x}_i^{t}) - \gamma \mathbf{s}^{t+1}_i$
\ENDFOR 
\end{algorithmic}
\end{algorithm}
Each node $i$ updates four variables. First, $\mathbf{x}_i^t$ is the variable that estimates the global optimum $\mathbf{x}^*$. The variable $\mathbf{y}^t_i$ reconstructs the average of the estimates over the nodes of the network $\frac{1}{\mathsf{N}}\sum_{i=1}^{\mathsf{N}}\mathbf{x}_i^t$ and aims at eliminating the consensus error. The variable $\mathbf{s}^t_i$ reconstructs the average of the gradients $\frac{1}{\mathsf{N}}\sum_{i=1}^{\mathsf{N}}\nabla f_i(\mathbf{x}_i^t)$ and aims at reducing the error $\mathbf{e}^t_i = ||\mathbf{x}_i^t - \mathbf{x}^*||$. Finally, $\mathbf{z}_{ij}^t \in \mathbb{R}^{2n}$ are auxiliary variables that enforce consensus over $\mathbf{y}^t_i$ and $\mathbf{s}^t_i$ \cite{bastianello2020asynchronous}.  

One fundamental aspect of Algorithm \ref{al:original} is that, by conveniently rewriting the dynamics of \mbox{$\mathbf{x}^t = [(\mathbf{x}^t_1)^{\top}, \hdots, (\mathbf{x}^t_{\mathsf{N}})^{\top}]^{\top}$} and $\mathbf{z}^t = [(\mathbf{z}^t_1)^{\top}, \hdots, (\mathbf{z}^t_{\mathsf{N}})^{\top}]^{\top}$ (with $\mathbf{z}^t_i = [(\mathbf{z}^t_{i1})^{\top}, \hdots, (\mathbf{z}^t_{id_i})^{\top}]^{\top}$), it can be shown that the overall dynamics of the scheme have the structure of a time-varying singularly perturbed system \cite{carnevale2023distributed}. Therefore, relying on Theorem \ref{th:sps}, for a sufficiently small $\gamma$, Algorithm~\ref{al:original} converges linearly to the global optimum of \eqref{eq:original}.

In the following section, we present an algorithm that, by adding momentum to the dynamics of $\mathbf{y}^t_i$, $\mathbf{s}^t_i$ and $\mathbf{z}^t_{ij}$, speeds up the convergence while preserving the computation and communication properties of Algorithm \ref{al:original}.

\section{Accelerated ADMM Gradient Tracking}\label{sec:proposal}
The proposed accelerated distributed optimization scheme is presented in Algorithm \ref{al:accelerated}. 
\begin{algorithm}
\caption{\textsf{A2DMM-GT}  at node $i$}\label{al:accelerated}
\begin{algorithmic}[1]
\STATE \textbf{State of agent}: $\mathbf{x}^{0}_i \in \mathbb{R}^n$,  $\mathbf{z}^0_{ij} \in \mathbb{R}^{2n} \quad \forall j \in \mathcal{N}_i$, \mbox{$\mathbf{z}^{-1}_{ij} \in \mathbb{R}^{2n} \quad \forall j \in \mathcal{N}_i$}, $\mathbf{y}^{0}_i \in \mathbb{R}^n$, $\mathbf{s}^{0}_i \in \mathbb{R}^n$
\STATE \textbf{Parameters}: $\gamma  > 0$, $\rho > 0$, $\alpha \in (0,1)$, $\lambda \in (1, 2)$, $\mu \in (1, 2)$, $\epsilon \in (0, 1)$
\FOR{$t=1,\hdots$}
    \STATE $\bar{\mathbf{y}}^{t+1}_i = \frac{1}{1 + \rho d_i}(\mathbf{x}_i^t + \begin{pmatrix}
        \mathbf{I}_n & \mathbf{0}_{n\times n}
    \end{pmatrix} \sum_{j\in\mathcal{N}_i}\mathbf{z}_{ij}^t)$
    \STATE $\bar{\mathbf{s}}^{t+1}_i = \frac{1}{1 + \rho d_i}(\nabla f_i(\mathbf{x}_i^t) + \begin{pmatrix}
        \mathbf{0}_{n\times n} &
        \mathbf{I}_n 
    \end{pmatrix} \sum_{j\in\mathcal{N}_i}\mathbf{z}_{ij}^t)$
    \STATE $\mathbf{y}^{t+1}_i = \mathbf{y}^{t}_i + \lambda(\bar{\mathbf{y}}^{t+1}_i - \mathbf{y}^{t}_i)$
    \STATE $\mathbf{s}^{t+1}_i = \mathbf{s}^{t}_i + \lambda(\bar{\mathbf{s}}^{t+1}_i - \mathbf{s}^{t}_i)$
    \STATE $\bar{\mathbf{z}}_{ij}^{t+1} = (1-\alpha)\mathbf{z}_{ij}^t - \alpha (\mathbf{z}_{ji}^t - 2 \rho \begin{pmatrix}
        \mathbf{y}^{t+1}_i \\
        \mathbf{s}^{t+1}_i 
    \end{pmatrix} ) \quad \forall j \in \mathcal{N}_i$ 
    \STATE $\mathbf{z}_{ij}^{t+1} = \mu(\mathbf{z}_{ij}^{t} + \epsilon(\bar{\mathbf{z}}_{ij}^{t+1}-\mathbf{z}_{ij}^{t})) + (1 - \mu) \mathbf{z}_{ij}^{t-1} \quad \forall j \in \mathcal{N}_i$
    \vspace{-0.3cm}
    \STATE $\mathbf{x}_i^{t+1} = \mathbf{x}_i^t + \gamma(\mathbf{y}^{t+1}_i - \mathbf{x}_i^{t}) - \gamma \mathbf{s}^{t+1}_i$
\ENDFOR 
\end{algorithmic}
\end{algorithm}
Algorithm \ref{al:accelerated} adds momentum to the dynamics of $\mathbf{y}_i^t$ and $\mathbf{s}_i^t$ (lines 6 and 7) and the dynamics of $\mathbf{z}_{ij}^{t}$ (line 9). The former allows to write Algorithm \ref{al:accelerated} as a chain of two singularly perturbed systems. \blue{The outer singularly perturbed system interconnects the dynamics given by $\mathbf{y}_i^t$ and $\mathbf{s}_i^t$ with the dynamics given by $\mathbf{x}_i^t$ and $\mathbf{z}_i^{t}$. The inner singularly perturbed system interconnects the dynamics given by $\mathbf{z}_i^{t}$ with the dynamics given by $\mathbf{x}_i^t$. In terms of convergence speed, there are three time-scales, from the fastest to the slowest: (1) $\mathbf{y}_i^t, \bar{\mathbf{y}}_i^t, \mathbf{s}_i^t, \bar{\mathbf{s}}_i^t$; (2) $\mathbf{z}_i^t, \bar{\mathbf{z}}_i^t$; and (3) $\mathbf{x}_i^t$.} This time-scale separation leads to a novel convergence analysis of the \textsf{ADMM-GT} algorithm, discovering a limitation in the maximum achievable convergence rate of the dynamics of $\mathbf{z}_i^{t}$. 
By adding momentum to $\mathbf{z}_i^{t}$ this bound is overcome, enhancing the overall convergence rate when compared to Algorithm \ref{al:original}. \blue{Additionally, by respecting the ADMM gradient tracking structure of Algorithm \ref{al:original}, Algorithm \ref{al:accelerated} preserves the robustness from ADMM proved in \cite{carnevale2023admm} and the computational simplicity of gradient tracking methods (the momentum operations are inexpensive to compute).}
Comparing Algorithms \ref{al:original} and \ref{al:accelerated}, their communication cost is the same since only the $\mathbf{z}_{ij}^t$ are exchanged. \blue{In terms of memory, each node has to additionally store $\mathbf{z}_{ij}^{t-1}$, $\mathbf{y}_{i}^{t}$, $\mathbf{s}_{i}^{t}$.}  

To analyze the convergence properties of Algorithm \ref{al:accelerated}, we define the following matrices:
\begin{equation*}
\begin{aligned}
    &\mathbf{A}_{x} \kern -0.1cm = \kern -0.1cm\mathsf{diag}\left(\kern -0.1cm\left\{\kern -0.1cm\begin{pmatrix}
    \mathbf{1}_{d_i,n} 
    \\
    \mathbf{0}_{d_i,n}
    \end{pmatrix}
    \right\}_{i=1}^{\mathsf{N}}\right), \mathbf{A}_{z} \kern -0.1cm = \kern -0.1cm\mathsf{diag}\left(\kern -0.1cm\left\{\kern -0.1cm\begin{pmatrix}
    \mathbf{0}_{d_i,n} 
    \\
    \mathbf{1}_{d_i,n}
    \end{pmatrix}
    \right\}_{i=1}^{\mathsf{N}}\right)
    \\&
    \mathbf{A} = \mathsf{diag}\left(\left\{\mathbf{1}_{d_i, 2n}\right\}_{i=1}^{\mathsf{N}}\right),
    \mathbf{H} = \mathsf{diag}\left(\left\{\frac{\mathbf{I}_n}{1 + \rho d_i}\right\}_{i=1}^{\mathsf{N}}\right). 
\end{aligned}
\end{equation*}
Besides, $\mathbf{P} \in \{0, 1\}^{2nd \times 2nd}$ is a permutation matrix that exchanges \blue{auxiliary variable $\mathbf{z}^t_{ij}$ with auxiliary variable $\mathbf{z}^t_{ji}$ and implements the pairwise message passing found in line 8 of Algorithm \ref{al:accelerated}}, $\mathbf{x}^t = [(\mathbf{x}^t_1)^{\top}, \hdots, (\mathbf{x}^t_{\mathsf{N}})^{\top}]^{\top}$, \mbox{$\mathbf{z}^t = [(\mathbf{z}^t_1)^{\top}, \hdots, (\mathbf{z}^t_{\mathsf{N}})^{\top}]^{\top}$}, $\mathbf{z}^t_i = [(\mathbf{z}^t_{i1})^{\top}, \hdots, (\mathbf{z}^t_{id_i})^{\top}]^{\top}$, \mbox{$\mathbf{y}^t = [(\mathbf{y}^t_1)^{\top}, \hdots, (\mathbf{y}^t_{\mathsf{N}})^{\top}]^{\top}$} and $\mathbf{s}^t = [(\mathbf{s}^t_1)^{\top}, \hdots, (\mathbf{s}^t_{\mathsf{N}})^{\top}]^{\top}.$
Finally, let $\mathbf{g}(\mathbf{x}^t) = [(\nabla f_1(\mathbf{x}^t_1))^{\top},\dots, (\nabla f_{\mathsf{N}}(\mathbf{x}^t_{\mathsf{N}}))^{\top}]^{\top}$ and $\mathbf{v}(\mathbf{y}^t,\mathbf{s}^t) = [(\mathbf{y}^t_1)^{\top},(\mathbf{s}^t_1)^{\top},\dots,(\mathbf{y}^t_{\mathsf{N}})^{\top},(\mathbf{s}^t_{\mathsf{N}})^{\top}]^{\top}$.

Exploiting these definitions, Algorithm \ref{al:accelerated} for the whole network can be written as
\begin{subequations}\label{eq:proposed_global}
    \begin{alignat}{2}
        \hspace{-0.2cm}\mathbf{y}^{t+1} =& (1 - \lambda) \mathbf{y}^{t} + \lambda \mathbf{H}(\mathbf{x}^t + \mathbf{A}_x^{\blue{\top}} \mathbf{z}^t),
        \\
        \hspace{-0.2cm}\mathbf{s}^{t+1} =& (1 - \lambda) \mathbf{s}^{t} + \lambda \mathbf{H}(\mathbf{g}(\mathbf{x}^t) + \mathbf{A}_z^{\blue{\top}} \mathbf{z}^t),
        \\
        \hspace{-0.2cm}\mathbf{z}^{t+1} =& \mu \mathbf{F} \mathbf{z}^{t} \kern -0.1cm + \kern -0.1cm (1-\mu)\mathbf{z}^{t-1} \kern -0.1cm + \kern -0.1cm2\mu\epsilon\alpha\rho\mathbf{P}\mathbf{A}\mathbf{v}(\mathbf{y}^{t+1},\mathbf{s}^{t+1}),\label{eq:zzz}
        \\ 
        \hspace{-0.2cm}\mathbf{x}^{t+1} =& \mathbf{x}^{t} + \gamma(\mathbf{y}^{t+1} - \mathbf{x}^{t}) - \gamma \mathbf{s}^{t+1}
    \end{alignat}
\end{subequations}
where $\mathbf{F} = (1 - \epsilon\alpha)\mathbf{I}_{2nd} - \epsilon\alpha\mathbf{P}$. The compact expression in \eqref{eq:proposed_global} can be reformulated as an interconnection of two singularly perturbed systems. In Fig. \ref{fig:interconnection} we represent the interconnections between the dynamics of $\mathbf{x}^t$, $\mathbf{z}^t$, $\mathbf{y}^t$ and $\mathbf{s}^t$. The inner system (green) 
connects the dynamics of $\mathbf{x}^t$ (slow) and the dynamics of $\mathbf{z}^t$ (fast). Indeed, if $\mu=\epsilon=1$, the inner system corresponds to the singularly perturbed system studied in \cite{carnevale2023admm}. On the other hand, the outer system is also a singularly perturbed system given by $\mathbf{r}^t = [(\mathbf{y}^t)^{\top}, (\mathbf{s}^t)^{\top}]^{\top}$, $\mathbf{w}^t = [(\mathbf{x}^t)^{\top}, (\mathbf{z}^t)^{\top}, (\mathbf{z}^{t-1})^{\top}]^{\top}$,
$\mathbf{v}(\mathbf{r}^t)= \mathbf{v}(\mathbf{y}^t,\mathbf{s}^t)$, and
\begin{equation}\label{eq:fast_expanded}
        \begin{aligned} 
        \mathbf{r}^{t+1} =& (1 - \lambda) \mathbf{r}^{t} + \lambda \begin{pmatrix}
            \mathbf{H}(\mathbf{x}^t + \mathbf{A}_x \mathbf{z}^t)
            \\
            \mathbf{H}(\mathbf{g}(\mathbf{x}^t) + \mathbf{A}_z \mathbf{z}^t)
        \end{pmatrix} = 
        \\&
        (1 - \lambda) \mathbf{r}^{t} + \lambda \mathbf{h}(\mathbf{w}^t),
        \end{aligned}
\end{equation}
\begin{equation}\label{eq:slow_expanded}
        \begin{aligned}
        \mathbf{w}^{t+1} =& 
        \begin{pmatrix}
            (1-\gamma)\mathbf{I}_{\blue{\mathsf{N}n}} & \mathbf{0}_{\blue{\mathsf{N}n}\times2nd} & \mathbf{0}_{\blue{\mathsf{N}n}\times2nd}
            \\
            \mathbf{0}_{2nd\times2nd} &
            \mu \mathbf{F} & (1-\mu)\mathbf{I}_{2nd}
            \\
            \mathbf{0}_{2nd\times2nd}
            &
            \mathbf{I}_{2nd}
            &
            \mathbf{0}_{2nd\times2nd}
        \end{pmatrix}
        \mathbf{w}^{t} + 
        \\&
        \begin{pmatrix}
            \gamma\begin{pmatrix}
                \mathbf{I}_{\blue{\mathsf{N}n}} & -\mathbf{I}_{\blue{\mathsf{N}n}}
            \end{pmatrix} \mathbf{r}^t
            \\
            2\mu\epsilon\alpha\rho\mathbf{P}\mathbf{A}\mathbf{v}(\mathbf{r}^t)
            \\
            \mathbf{0}_{2nd}
        \end{pmatrix} = 
        \mathbf{Q}\mathbf{w}^t + \mathbf{B}(\mathbf{r}^t).
        \end{aligned}
\end{equation}
Given an arbitrary constant $\mathbf{w}^t = \mathbf{w}$ for all $t$, let function  $\mathbf{r}_{\mathsf{eq}}(\mathbf{w})$ from \eqref{eq:fast_expanded} be
    \begin{equation}\label{eq:eq}
        \mathbf{r}_{\mathsf{eq}}(\mathbf{w}) = (1-\lambda)\mathbf{r}_{\mathsf{eq}}(\mathbf{w}) + \lambda \blue{\mathbf{h}(\mathbf{w})} \Rightarrow \mathbf{r}_{\mathsf{eq}}(\mathbf{w}) = \blue{\mathbf{h}(\mathbf{w})}.
\end{equation} 
Then, we define the error coordinates $\tilde{\mathbf{r}}^{t} = \mathbf{r}^t - \mathbf{r}_{\mathsf{eq}}(\mathbf{w})$. Regarding the slow system dynamics \eqref{eq:slow_expanded}, we define the desired equilibrium $\mathbf{w}^*$ as
\begin{equation}\label{eq:desired_def}
        {\mathbf{w}}^* = 
        \begin{pmatrix}
            {\mathbf{x}}^* 
            \\
            2(\mathbf{I}_{2nd} + \mathbf{P})^{-1}\rho \mathbf{P}\mathbf{A}\mathbf{v}(\mathbf{r}_{\mathsf{eq}}(\mathbf{w}^t))
            \\ 
            2(\mathbf{I}_{2nd} + \mathbf{P})^{-1}\rho \mathbf{P}\mathbf{A}\mathbf{v}(\mathbf{r}_{\mathsf{eq}}(\mathbf{w}^t))
        \end{pmatrix},
    \end{equation}
where we have exploited the fact that $\mathbf{z}^*$ can be arbitrarily chosen, making it coincide with the value of $\mathbf{z}^t$ when \mbox{$\mathbf{z}^t = \mathbf{z}^{t-1}$}, \blue{which is the steady-state value of $\mathbf{z}^t$ in Eq. \eqref{eq:zzz}}. Finally, we define the error dynamics \mbox{$\tilde{\mathbf{w}}^t = {\mathbf{w}}^t - {\mathbf{w}}^*$}.

We aim at exploiting Theorem \ref{th:sps} to prove the convergence properties of Algorithm \ref{al:accelerated}. To do so, we first study the convergence properties of the boundary system layer error dynamics $\tilde{\mathbf{r}}^{t+1}$ and the reduced system error dynamics $\tilde{\mathbf{w}}^{t+1}$ by drawing their equivalence with the fast \eqref{eq:fast} and slow \eqref{eq:slow} dynamics respectively, proving that conditions \eqref{eq:cond1}-\eqref{eq:cond5} and \eqref{eq:cond2}-\eqref{eq:cond6} hold. In Lemmas \ref{prop:u} and \ref{prop:w} we prove that $\tilde{\mathbf{r}}^{t+1}$ can be written in the form of Eq. \eqref{eq:fast} and $\tilde{\mathbf{w}}^{t+1}$ in the form of Eq. \eqref{eq:slow}. After that, Proposition \ref{prop:ges} resorts to Theorem \ref{th:sps} to prove the global exponential stability of the dynamics of $\tilde{\mathbf{r}}^{t+1}$ and $\tilde{\mathbf{w}}^{t+1}$ and, consequently, the global exponential stability of Algorithm \ref{al:accelerated}. Finally, we use Lemma \ref{prop:w} to demonstrate that Algorithm \ref{al:accelerated} leads to acceleration with respect to Algorithm \ref{al:original}, formalized through Theorem \ref{prop:rate}, where we substitute the slow state from $\mathbf{p}$ to $\tilde{\mathbf{w}}$ and the fast state from $\mathbf{q}$ to $\tilde{\mathbf{r}}$. 

\begin{figure}
    \centering
    \includegraphics[width=1\columnwidth]{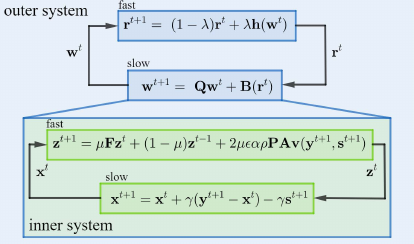}
    \caption{Representation of Algorithm \ref{al:accelerated} as an interconnection of two singularly perturbed systems. The outer singularly perturbed system connects the dynamics of the consensus variables $\mathbf{r}^t$ and the estimates/auxiliary variables $\mathbf{w}^t$. The inner singularly perturbed system connects the dynamics of the estimates $\mathbf{x}^t$ and the auxiliary variables $\mathbf{z}^t$.}
    \label{fig:interconnection}
\end{figure}

\begin{lemma}\label{prop:u}
    Let $\mathbf{w}^t = \mathbf{w}$. There exists a continuous function $u(\tilde{\mathbf{r}}^{t},t)$ such that, for $\lambda \in (1,2)$, \eqref{eq:cond1}, \eqref{eq:cond3} and \eqref{eq:cond5} hold.
\end{lemma}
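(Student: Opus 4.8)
The plan is to exhibit the Lyapunov function explicitly and verify the three inequalities by direct computation, after first reducing the boundary-layer dynamics of $\tilde{\mathbf{r}}^t$ to a scalar-gain linear contraction. First I would fix $\mathbf{w}^t = \mathbf{w}$, recall from \eqref{eq:eq} that $\mathbf{r}_{\mathsf{eq}}(\mathbf{w}) = \mathbf{h}(\mathbf{w})$, and substitute $\mathbf{r}^t = \tilde{\mathbf{r}}^t + \mathbf{r}_{\mathsf{eq}}(\mathbf{w})$ into \eqref{eq:fast_expanded}. The terms proportional to $\mathbf{h}(\mathbf{w})$ cancel, leaving the boundary-layer error dynamics
\begin{equation*}
    \tilde{\mathbf{r}}^{t+1} = (1-\lambda)\tilde{\mathbf{r}}^{t},
\end{equation*}
which is exactly of the form \eqref{eq:fast} with $g(\tilde{\mathbf{r}}^{t},\tilde{\mathbf{w}}^{t},t) = (1-\lambda)\tilde{\mathbf{r}}^{t}$ (linear, hence Lipschitz continuous uniformly over $t$), and is a contraction because $\lambda \in (1,2)$ forces $|1-\lambda| < 1$. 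Note this dynamics does not depend on $\gamma$.

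Next I would propose the candidate $u(\tilde{\mathbf{r}},t) = \|\tilde{\mathbf{r}}\|^2$, which is continuous and time-invariant. Condition \eqref{eq:cond1} then holds with $b_1 = b_2 = 1$. For \eqref{eq:cond3}, using the reduced dynamics above,
\begin{equation*}
    u(\tilde{\mathbf{r}}^{t+1},t+1) - u(\tilde{\mathbf{r}}^{t},t) = \big((1-\lambda)^2 - 1\big)\|\tilde{\mathbf{r}}^{t}\|^2 = -\lambda(2-\lambda)\|\tilde{\mathbf{r}}^{t}\|^2,
\end{equation*}
so \eqref{eq:cond3} holds with $b_3 = \lambda(2-\lambda)$, which is strictly positive precisely because $\lambda \in (1,2) \subset (0,2)$. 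Finally, for \eqref{eq:cond5}, the reverse triangle inequality gives
\begin{equation*}
    \big|\,\|\tilde{\mathbf{r}}_1\|^2 - \|\tilde{\mathbf{r}}_2\|^2\,\big| = \big|\,\|\tilde{\mathbf{r}}_1\| - \|\tilde{\mathbf{r}}_2\|\,\big|\,\big(\|\tilde{\mathbf{r}}_1\| + \|\tilde{\mathbf{r}}_2\|\big) \leq \|\tilde{\mathbf{r}}_1 - \tilde{\mathbf{r}}_2\|\,\big(\|\tilde{\mathbf{r}}_1\| + \|\tilde{\mathbf{r}}_2\|\big),
\end{equation*}
so \eqref{eq:cond5} holds with $b_4 = 1$.

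There is no genuine obstacle here: the momentum update on $\mathbf{y}_i^t$ and $\mathbf{s}_i^t$ makes the boundary-layer dynamics an exactly decoupled linear recursion with constant gain $1-\lambda$, for which the quadratic Lyapunov function works verbatim. The only points requiring care are the bookkeeping of the cancellation of $\mathbf{h}(\mathbf{w})$ and the observation that the prescribed range $\lambda \in (1,2)$ is used exactly where it is needed, namely to keep $\lambda(2-\lambda)>0$ (equivalently $|1-\lambda|<1$). All the substantive difficulty of the convergence analysis is deferred to Lemma \ref{prop:w} (the slow/reduced system) and Proposition \ref{prop:ges}.
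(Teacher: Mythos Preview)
Your proof is correct and follows essentially the same route as the paper: reduce the boundary-layer dynamics to $\tilde{\mathbf{r}}^{t+1}=(1-\lambda)\tilde{\mathbf{r}}^t$ via the cancellation $\mathbf{r}_{\mathsf{eq}}(\mathbf{w})=\mathbf{h}(\mathbf{w})$, then take a quadratic Lyapunov function (the paper uses $u=\tfrac{1}{2}\|\tilde{\mathbf{r}}\|^2$, you use $\|\tilde{\mathbf{r}}\|^2$, which is immaterial). Your version is in fact more explicit, supplying the constants $b_1,b_2,b_3,b_4$ that the paper leaves implicit.
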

\begin{proof}
    The boundary system layer dynamics of \eqref{eq:fast_expanded} is
    \begin{equation}
        \tilde{\mathbf{r}}^{t+1} \kern -0.1cm = \kern -0.1cm \mathbf{r}^{t+1} - \mathbf{r}_{\mathsf{eq}}(\mathbf{w}) \kern -0.1cm = \kern -0.1cm (1 - \lambda)(\tilde{\mathbf{r}}^{t} + \mathbf{r}_{\mathsf{eq}}(\mathbf{w})) + \lambda \mathbf{h}(\mathbf{w}^t) - \mathbf{r}_{\mathsf{eq}}(\mathbf{w}),
    \end{equation}
    from which, using \eqref{eq:eq}, it is derived that $\tilde{\mathbf{r}}^{t+1} = (1 - \lambda)\tilde{\mathbf{r}}^{t}$.
    Thus, if $\lambda \in (1, 2)$, then the error dynamics of $\tilde{\mathbf{r}}^{t+1}$ is globally exponentially stable and the conditions \eqref{eq:cond1}, \eqref{eq:cond3} and \eqref{eq:cond5} are satisfied by choosing as Lyapunov function $u(\tilde{\mathbf{r}}^{t},t) = \frac{1}{2}(\tilde{\mathbf{r}}^{t})^{\top}\tilde{\mathbf{r}}^{t}$.
\end{proof}
\begin{lemma}\label{prop:w}
    Let $\mathbf{r}^t = \mathbf{r}_{\mathsf{eq}}(\mathbf{w}^t)$, $\lambda, \mu \in (1, 2)$, $\gamma > 0$, \mbox{$\epsilon, \alpha \in (0, 1)$} and $\rho>0$. There exists a continuous function $w(\tilde{\mathbf{w}}^t, t)$ such that \eqref{eq:cond2}, \eqref{eq:cond4} and \eqref{eq:cond6} hold.
\end{lemma}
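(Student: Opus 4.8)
The goal is to show that the reduced-system error dynamics $\tilde{\mathbf{w}}^{t+1}$, obtained by substituting $\mathbf{r}^t = \mathbf{r}_{\mathsf{eq}}(\mathbf{w}^t)$ into \eqref{eq:slow_expanded}, admits a quadratic Lyapunov function $w(\tilde{\mathbf{w}}^t,t)$ satisfying \eqref{eq:cond2}, \eqref{eq:cond4}, \eqref{eq:cond6}. First I would derive the closed-loop reduced dynamics: plugging $\mathbf{r}^t=\mathbf{r}_{\mathsf{eq}}(\mathbf{w}^t)=\mathbf{h}(\mathbf{w}^t)$ into \eqref{eq:slow_expanded} gives $\mathbf{w}^{t+1} = \mathbf{Q}\mathbf{w}^t + \mathbf{B}(\mathbf{h}(\mathbf{w}^t))$, which I would rewrite in error coordinates $\tilde{\mathbf{w}}^t = \mathbf{w}^t - \mathbf{w}^*$. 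Since $\mathbf{h}$ is affine in $\mathbf{x}^t$ and $\mathbf{z}^t$ except for the gradient term $\mathbf{g}(\mathbf{x}^t)$, the resulting map splits into a linear part governed by an effective matrix — call it $\mathbf{M}(\gamma,\mu,\epsilon,\alpha,\rho)$ — plus the strongly-monotone, Lipschitz perturbation coming from $\gamma(\mathbf{g}(\mathbf{x}^t) - \mathbf{g}(\mathbf{x}^*))$ (using $\nabla f_i$ both $\mathbf{c}$-strongly convex and $L$-Lipschitz). The block structure is block-triangular: the $\mathbf{x}^t$-block evolves as $(1-\gamma)\mathbf{x}^t + \gamma(\text{consensus avg}) - \gamma \mathbf{s}$-type terms, while the $(\mathbf{z}^t,\mathbf{z}^{t-1})$-block is driven both by its own two-step recursion $\mu\mathbf{F}\mathbf{z}^t + (1-\mu)\mathbf{z}^{t-1}$ and by $\mathbf{v}(\mathbf{r}_{\mathsf{eq}})$, which itself depends affinely on $\mathbf{x}^t$ and $\mathbf{z}^t$.

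The key technical step is a spectral/Lyapunov analysis of the linear part. I would argue that for $\mu\in(1,2)$ and $\epsilon,\alpha\in(0,1)$ the companion-form block $\begin{pmatrix}\mu\mathbf{F} & (1-\mu)\mathbf{I}\\ \mathbf{I} & \mathbf{0}\end{pmatrix}$ is Schur: $\mathbf{F}=(1-\epsilon\alpha)\mathbf{I}-\epsilon\alpha\mathbf{P}$ with $\mathbf{P}$ a permutation (hence $\sigma(\mathbf{P})\in\{-1,+1\}$ on the relevant subspace, with the eigenvalue $+1$ on the consensus/agreement subspace where $\mathbf{z}^t=\mathbf{z}^{t-1}$ is the chosen equilibrium and thus factored out), so each scalar mode satisfies a quadratic $\sigma^2 - \mu f \sigma + (\mu-1)=0$ with $f\in(0,1]$ — the product of roots is $\mu-1\in(0,1)$ and one checks the sum condition so that both roots lie strictly inside the unit disk on the complement of the agreement subspace. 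Then, because the $\mathbf{x}^t$-subsystem is a contraction for small $\gamma$ (eigenvalue $1-\gamma$ plus an $O(\gamma)$ coupling with the Schur $\mathbf{z}$-block, and the gradient term contributes $\gamma(\mathbf{I} - \gamma\frac{\partial\mathbf{g}}{\partial\mathbf{x}})$-type contraction via strong convexity with rate $1 - O(\gamma)$), the whole $\mathbf{M}$ is Schur for $\gamma$ small. By the discrete Lyapunov lemma there exists $\mathbf{S}=\mathbf{S}^\top\succ0$ with $\mathbf{M}^\top\mathbf{S}\mathbf{M} - \mathbf{S} \prec 0$, and I would set $w(\tilde{\mathbf{w}}^t,t)=(\tilde{\mathbf{w}}^t)^\top\mathbf{S}\tilde{\mathbf{w}}^t$; this immediately gives \eqref{eq:cond2} with $c_1=\sigma_{\min}(\mathbf{S})$, $c_2=\sigma_{\max}(\mathbf{S})$, and \eqref{eq:cond6} with $c_4$ depending on $\|\mathbf{S}\|$ and the Lipschitz constant of the map (using $|a^\top\mathbf{S}a - b^\top\mathbf{S}b| = |(a-b)^\top\mathbf{S}(a+b)| \le \|\mathbf{S}\|\,\|a-b\|\,\|a+b\|$ together with Lipschitzness of $\tilde{\mathbf{w}}\mapsto\tilde{\mathbf{w}}^{t+1}$). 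For \eqref{eq:cond4} I would expand $w(\tilde{\mathbf{w}}^{t+1}) - w(\tilde{\mathbf{w}}^t)$, bound the linear part by $-\delta\|\tilde{\mathbf{w}}^t\|^2$ from the Lyapunov inequality, and absorb the cross-terms with the gradient perturbation using Young's inequality, choosing $\gamma$ small enough that $c_3>0$ survives — this is exactly where the upper bound $\bar{\gamma}_2$ on $\gamma$ enters.

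**Main obstacle.** The delicate point is handling the semidefinite/agreement direction: $\mathbf{F}$ has the eigenvalue $+1$ on $\ker(\mathbf{I}-\mathbf{P})$ (the subspace $\mathbf{z}_{ij}=\mathbf{z}_{ji}$), so the companion block is only marginally stable there, and the chosen equilibrium $\mathbf{w}^*$ in \eqref{eq:desired_def} is precisely designed to live on that direction (note $2(\mathbf{I}+\mathbf{P})^{-1}$ is well-defined exactly on the complement of $\ker(\mathbf{I}+\mathbf{P})$, and the equilibrium has $\mathbf{z}^t=\mathbf{z}^{t-1}$). I would therefore need to change coordinates to split $\tilde{\mathbf{w}}^t$ into the component along the agreement subspace — where I should verify the dynamics are genuinely invariant and already at equilibrium, or otherwise contracting due to the coupling with $\mathbf{v}(\mathbf{r}_{\mathsf{eq}})$ and the $\mathbf{x}^t$-dynamics — and the component on the complement, where $\mathbf{M}$ is strictly Schur. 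Making this decomposition rigorous (showing the error $\tilde{\mathbf{w}}^t$ never has an uncontrolled agreement component, which is what \eqref{eq:desired_def} and the driving term $2\mu\epsilon\alpha\rho\mathbf{P}\mathbf{A}\mathbf{v}$ are engineered to guarantee), and tracking how the three parameter ranges $\lambda,\mu\in(1,2)$, $\epsilon,\alpha\in(0,1)$, $\gamma$ small interact to keep all roots inside the disk, is the crux; the rest is a routine Young-inequality Lyapunov computation.
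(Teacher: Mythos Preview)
Your approach is broadly correct but takes a heavier route than the paper, because you miss the key structural simplification. The paper observes that with the specific $\mathbf{z}^*$ in \eqref{eq:desired_def} the forcing term $2\mu\epsilon\alpha\rho\mathbf{P}\mathbf{A}\mathbf{v}(\mathbf{r}_{\mathsf{eq}}(\mathbf{w}^t))$ cancels exactly against $\mu(\mathbf{F}-\mathbf{I})\mathbf{z}^*$ (since $\mathbf{F}-\mathbf{I}=-\epsilon\alpha(\mathbf{I}+\mathbf{P})$), so in error coordinates the $\tilde{\mathbf{z}}$-block becomes the \emph{autonomous} linear recursion \eqref{eq:z_error_dynamics}, completely decoupled from $\mathbf{x}^t$. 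You instead keep the $\mathbf{z}$-block driven by $\mathbf{v}(\mathbf{r}_{\mathsf{eq}})$, which forces you into a genuinely coupled block-triangular analysis, a monolithic $\mathbf{S}$, and Young-inequality cross-term absorption. The paper never needs any of that: once decoupled, it computes the eigenvalues of the companion matrix $\mathbf{L}$ directly (your quadratic $\sigma^2-\mu f\sigma+(\mu-1)=0$ is exactly what appears), and for the $\tilde{\mathbf{x}}$-block it simply notes that \eqref{eq:x_error_dynamics} coincides with the reduced system of Algorithm~\ref{al:original} and invokes Theorem~III.1 of \cite{carnevale2023admm} rather than redoing the strong-convexity contraction by hand.

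Your ``main obstacle'' is well spotted and is in fact glossed over in the paper. The eigenvalue $+1$ of $\mathbf{P}$ gives $f=1$ and hence $\sigma^2-\mu\sigma+(\mu-1)=(\sigma-1)(\sigma-(\mu-1))$, so $\sigma=1$ is always a root and $\mathbf{L}$ is only marginally Schur on $\ker(\mathbf{I}-\mathbf{P})$; the paper's claim at \eqref{eq:eigen_expression_2} that $-1<\sigma<1$ is not literally true on that subspace. Your proposed fix---split into agreement and disagreement components and check that the agreement component is already pinned at equilibrium by the construction of $\mathbf{z}^*$---is exactly the missing step; the paper's choice of $\mathbf{z}^*$ is engineered so that the steady-state agreement direction is absorbed, but this is left implicit. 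So: your plan would work but is substantially more laborious; the paper's route is decoupling plus a citation, with the subtlety you flagged left under the rug at \eqref{eq:eigen_expression_2}.
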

\begin{proof}
    First, the error dynamics $\tilde{\mathbf{w}}^{t+1}$ are 
    \begin{equation}
        \begin{aligned}
        \tilde{\mathbf{w}}^{t+1} =& 
        \mathbf{Q}
        (\tilde{\mathbf{w}}^{t} + \mathbf{w}^*) + 
        \mathbf{B}(\mathbf{r}_{\mathsf{eq}}(\mathbf{w}^t))  - \mathbf{w}^*
        \end{aligned} \Rightarrow
    \end{equation}
    \begin{equation} \label{eq:slow_error}
        \tilde{\mathbf{w}}^{t+1}  \kern -0.1cm=\kern -0.1cm 
        \mathbf{Q} \tilde{\mathbf{w}}^t \kern -0.05cm+ \kern -0.05cm
        \begin{pmatrix} 
            -\gamma\mathbf{x}^*
            \\
            \mu\mathbf{F}\mathbf{z}^* -\mu\mathbf{z}^*
            \\
            \mathbf{z}^* - \mathbf{z}^*
        \end{pmatrix}\kern -0.05cm
         +  \kern -0.05cm
        \mathbf{B}(\mathbf{r}_{\mathsf{eq}}(\mathbf{w}^t)).
    \end{equation}
    By substituting the definition of $\mathbf{z}^*$ from Eq. \eqref{eq:desired_def} \blue{in matrix $\mathbf{B}(\mathbf{r}_{\mathsf{eq}}(\mathbf{w}^t))$ of} Eq. \eqref{eq:slow_error}, we obtain that
    \begin{equation} \label{eq:w_error_dynamics}
        \tilde{\mathbf{w}}^{t+1} =
        \mathbf{Q} \tilde{\mathbf{w}}^t + 
        \begin{pmatrix}
            -\gamma\mathbf{x}^* + \gamma\begin{pmatrix}
                \mathbf{I}_{\blue{\mathsf{N}n}} & -\mathbf{I}_{\blue{\mathsf{N}n}}
            \end{pmatrix} \mathbf{h}(\mathbf{w}^t)
            \\
            \mathbf{0}_{2nd}
            \\
            \mathbf{0}_{2nd}
        \end{pmatrix}.
    \end{equation}
    According to \eqref{eq:w_error_dynamics}, the error dynamics associated to $\mathbf{z}^{t+1}$ and $\mathbf{z}^{t}$ do not depend on $\mathbf{x}^t$
    \begin{equation}\label{eq:z_error_dynamics}
        \kern -0.1cm
        \begin{pmatrix}
            \tilde{\mathbf{z}}^{t+1}
            \\
            \tilde{\mathbf{z}}^{t}
        \end{pmatrix} 
        \kern -0.1cm
        =
        \kern -0.1cm
        \begin{pmatrix}
            \mu \mathbf{F} & (1-\mu)\mathbf{I}_{2nd}
            \\
            \mathbf{I}_{2nd} & \mathbf{0}_{2nd\times 2nd}
        \end{pmatrix} 
        \kern -0.1cm
        \begin{pmatrix}
            \tilde{\mathbf{z}}^{t}
            \\
            \tilde{\mathbf{z}}^{t-1}
        \end{pmatrix} 
        \kern -0.1cm
        = 
        \kern -0.1cm
        \mathbf{L} 
        \begin{pmatrix}
            \tilde{\mathbf{z}}^{t}
            \\
            \tilde{\mathbf{z}}^{t-1}
        \end{pmatrix}, 
    \end{equation}
    so they can be independently analyzed. \blue{Moreover, \eqref{eq:z_error_dynamics} can be rewritten in the form of \eqref{eq:slow} by adding and subtracting the identity matrix from $\mathbf{L}$.} On the other hand, the error dynamics associated to $\mathbf{x}^t$
    \begin{equation}\label{eq:x_error_dynamics}
        \kern -0.05cm\tilde{\mathbf{x}}^{t+1} \kern -0.05cm=\kern -0.05cm \tilde{\mathbf{x}}^{t} \kern -0.05cm+\kern -0.05cm \gamma (\mathbf{H}(\mathbf{x}^t \kern -0.05cm+\kern -0.05cm \mathbf{A}_x\mathbf{z}^t)\kern -0.05cm-\kern -0.05cm\mathbf{x}^t) \kern -0.05cm-\kern -0.05cm \gamma \mathbf{H}(\mathbf{g}(\mathbf{x}^t) \kern -0.05cm+\kern -0.05cm \mathbf{A}_z \mathbf{z}^t)
    \end{equation}
    are the error dynamics of the reduced system of the inner singularly perturbed system, which are the same to those analyzed in \cite{carnevale2023admm}. \blue{In particular, Theorem III.1 in \cite{carnevale2023admm} proves that Algorithm \ref{al:original} is globally exponentially stable when $\rho>0$, $\alpha \in (0,1)$ and $\gamma>0$ sufficiently small. Conveniently, the proof of Theorem III.1 in \cite{carnevale2023admm} refers to the same dynamics in \eqref{eq:x_error_dynamics}, and therefore the result can be applied.} However, this result holds only if the dynamics in \eqref{eq:z_error_dynamics} are also globally exponentially stable. Otherwise, Theorem \ref{th:sps} applied to the dynamics of $\mathbf{x}^t$ and $\mathbf{z}^t$ does not hold. Therefore, the next step is to prove the global exponential stability of \eqref{eq:z_error_dynamics}. The error dynamics in \eqref{eq:z_error_dynamics} are globally exponentially stable if and only if $\{|\sigma_l(\mathbf{L})| < 1\}_{l=1}^{4nd}$. Taking into account that $\det(\mathbf{L} - \sigma\mathbf{I}_{4nd}) = 0$ and that $\mathbf{F} = (1 - \epsilon\alpha)\mathbf{I}_{2nd} - \epsilon\alpha\mathbf{P}$, the eigenvalues of $\mathbf{L}$ are
        \begin{equation}\label{eq:eigen_expression}
            -\sigma^2 + \sigma(\mu - \mu\alpha\epsilon \pm \mu\alpha\epsilon) + (1 - \mu) = 0.
        \end{equation}
        The $\pm$ in \eqref{eq:eigen_expression} comes from the fact that $\det(\mathbf{P}) = (-1)^p$, where $p$ is the number of permutations. If $p$ is even,  \begin{equation}\label{eq:eigen_expression_2}
            -\sigma^2 + \mu\sigma + (1 - \mu) = 0 \Rightarrow \sigma = \frac{\mu}{2} \pm \sqrt{\frac{\mu^2}{4} + (1 - \mu)}.
        \end{equation}
        From \eqref{eq:eigen_expression_2}, $-1 < \sigma < 1$. If $\mu \in (1,2)$. If $p$ is odd, then \begin{equation}\label{eq:eigen_expression_3}
            -\sigma^2 + \mu(1 - 2\alpha\epsilon)\sigma + (1 - \mu) = 0.
        \end{equation} 
        Equation \eqref{eq:eigen_expression_3} implies that
        \begin{equation}\label{eq:eigen_expression_4}
            -1 < \frac{\mu(1 - 2\epsilon\alpha)}{2} \pm \sqrt{\frac{\mu^2}{4}(1-2\epsilon\alpha)^2 + (1 - \mu)} < 1.
        \end{equation} 
        By solving the four inequality equations in \eqref{eq:eigen_expression_4} and taking the more restrictive conditions, we obtain that $\epsilon\alpha \in (0, 1)$, and since $\alpha \in (0, 1)$, then $\epsilon \in (0, 1)$. Therefore, if \mbox{$\epsilon \in (0, 1)$} and $\mu \in (1, 2)$, then the error dynamics in \eqref{eq:z_error_dynamics} are globally exponentially stable. This implies that the error dynamics in \eqref{eq:x_error_dynamics} are globally exponentially stable as well and the conditions for the inner singularly perturbed system in Theorem \ref{th:sps} hold. Subsequently, the error dynamics in \eqref{eq:w_error_dynamics} are globally exponentially stable and there exist a Lyapunov function $w(\tilde{\mathbf{w}}^t, t)$ such that \eqref{eq:cond2}, \eqref{eq:cond4} and \eqref{eq:cond6} are satisfied.  
\end{proof}
\begin{proposition}\label{prop:ges}
    Consider Algorithm \ref{al:accelerated} with $\rho>0$, \mbox{$\alpha, \epsilon \in (0, 1)$} and $\lambda, \mu \in (1, 2)$. Then, there exists \mbox{$\bar{\gamma}, c_1, c_2>0$} such that, for any $\gamma \in (0, \bar{\gamma})$, $(\mathbf{x}^0, \mathbf{z}^0, \mathbf{z}^{-1}, \mathbf{y}^0, \mathbf{s}^0)$, it holds that
    $
    ||\mathbf{x}^t - \mathbf{1}^{\top}_{\mathsf{N},n}\mathbf{x}^*|| \leq c_1 e^{-c_2 t}.
    $
\end{proposition}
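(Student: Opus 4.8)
The plan is to assemble Proposition~\ref{prop:ges} directly from the two preceding lemmas by invoking Theorem~\ref{th:sps} on the outer singularly perturbed system, with slow state $\tilde{\mathbf{w}}^t$ and fast state $\tilde{\mathbf{r}}^t$. Lemma~\ref{prop:u} already shows that the boundary layer error $\tilde{\mathbf{r}}^{t+1} = (1-\lambda)\tilde{\mathbf{r}}^t$ is globally exponentially stable for $\lambda \in (1,2)$, with Lyapunov function $u(\tilde{\mathbf{r}}^t,t) = \tfrac12 (\tilde{\mathbf{r}}^t)^\top \tilde{\mathbf{r}}^t$ satisfying \eqref{eq:cond1}, \eqref{eq:cond3}, \eqref{eq:cond5}; and Lemma~\ref{prop:w} shows that the reduced system error $\tilde{\mathbf{w}}^{t+1}$ admits a Lyapunov function $w(\tilde{\mathbf{w}}^t,t)$ satisfying \eqref{eq:cond2}, \eqref{eq:cond4}, \eqref{eq:cond6}, provided $\rho>0$, $\alpha,\epsilon\in(0,1)$, $\lambda,\mu\in(1,2)$ and $\gamma$ sufficiently small. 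So the first step is to check that the outer system \eqref{eq:fast_expanded}--\eqref{eq:slow_expanded} genuinely has the form \eqref{eq:slow}--\eqref{eq:fast}: the $\mathbf{r}$-dynamics, rewritten as $\mathbf{r}^{t+1} = \mathbf{r}^t + \lambda(\mathbf{h}(\mathbf{w}^t) - \mathbf{r}^t)$, plays the role of the fast dynamics $g(\cdot)$ (note $\lambda$ here is an $O(1)$ gain, not the small parameter), while the $\mathbf{w}$-dynamics $\mathbf{w}^{t+1} = \mathbf{w}^t + (\mathbf{Q}-\mathbf{I})\mathbf{w}^t + \mathbf{B}(\mathbf{r}^t)$ is the slow dynamics driven by $\gamma$, after factoring out $\gamma$ from the relevant blocks; one must verify the equilibrium conditions \eqref{eq:eq_cond} are met by $\mathbf{r}_{\mathsf{eq}}(\mathbf{w}) = \mathbf{h}(\mathbf{w})$ from \eqref{eq:eq} and by $\mathbf{w}^*$ from \eqref{eq:desired_def}, and that $\mathbf{h}$, $\mathbf{B}$ are Lipschitz (they are affine in $\mathbf{r}$ and, via $\mathbf{g}$, Lipschitz in $\mathbf{w}$ by the L-Lipschitz gradient assumption).

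The second step is the direct application: with both sets of Lyapunov conditions verified, Theorem~\ref{th:sps} yields $\bar{\gamma}>0$ and constants $\kappa_1,\kappa_2>0$ such that for all $\gamma\in(0,\bar{\gamma})$,
\begin{equation*}
\left\|\begin{pmatrix}\tilde{\mathbf{w}}^t \\ \tilde{\mathbf{r}}^t\end{pmatrix}\right\| \leq \kappa_1 \left\|\begin{pmatrix}\tilde{\mathbf{w}}^0 \\ \mathbf{r}^0 - \mathbf{r}_{\mathsf{eq}}(\mathbf{w}^0)\end{pmatrix}\right\| e^{-\kappa_2 t}.
\end{equation*}
In particular $\|\tilde{\mathbf{w}}^t\| \leq \kappa_1 \|(\tilde{\mathbf{w}}^0, \mathbf{r}^0 - \mathbf{r}_{\mathsf{eq}}(\mathbf{w}^0))\| e^{-\kappa_2 t}$, and since $\tilde{\mathbf{x}}^t$ is the first block of $\tilde{\mathbf{w}}^t$ and $\mathbf{x}^* $ here appears embedded as $\mathbf{1}_{\mathsf{N},n}\mathbf{x}^*$ (every node's copy equals $\mathbf{x}^*$ at equilibrium, consistent with \eqref{eq:desired_def}), we extract $\|\mathbf{x}^t - \mathbf{1}_{\mathsf{N},n}^\top \mathbf{x}^*\| \leq \|\tilde{\mathbf{w}}^t\|$. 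Setting $c_1 = \kappa_1\|(\tilde{\mathbf{w}}^0,\mathbf{r}^0-\mathbf{r}_{\mathsf{eq}}(\mathbf{w}^0))\|$ and $c_2 = \kappa_2$ gives the claimed bound. One should also note that $\bar{\gamma}$ here must be at most the threshold from Lemma~\ref{prop:w}, which itself inherits the inner-system threshold from \cite{carnevale2023admm}; since Theorem~\ref{th:sps} already takes $\bar{\gamma} \in (0,\min(\bar{\gamma}_1,\bar{\gamma}_2))$, this is automatic once Lemma~\ref{prop:w}'s hypotheses are in force.

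The main obstacle — really the only nontrivial point, since the lemmas do the heavy lifting — is the bookkeeping of which quantity is the small parameter and which is an $O(1)$ gain in the outer system. Theorem~\ref{th:sps} is stated with a single small $\gamma$ multiplying the slow dynamics and no small parameter in the fast dynamics; here the fast dynamics \eqref{eq:fast_expanded} contracts at rate $|1-\lambda|<1$ with $\lambda$ fixed, so the boundary-layer stability is uniform in $\gamma$, exactly matching the hypothesis structure, and the slow dynamics \eqref{eq:slow_expanded} must be written with an explicit $\gamma$ factor — but the $\mathbf{z}$-block of $\mathbf{Q}$ is \emph{not} of the form $\mathbf{I} + \gamma(\cdot)$. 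This is resolved by the remark in the proof of Lemma~\ref{prop:w} that \eqref{eq:z_error_dynamics} is globally exponentially stable on its own (for $\epsilon\in(0,1)$, $\mu\in(1,2)$) and can be put in the form \eqref{eq:slow} by adding and subtracting the identity; the composite slow dynamics then combines this $\gamma$-independent contraction with the $\gamma$-scaled $\mathbf{x}$-dynamics, and the Lyapunov function $w(\tilde{\mathbf{w}}^t,t)$ promised by Lemma~\ref{prop:w} already accounts for the full coupled $(\tilde{\mathbf{x}},\tilde{\mathbf{z}})$ system. So the proof is essentially: restate the hypotheses, cite Lemmas~\ref{prop:u} and~\ref{prop:w} for \eqref{eq:cond1}--\eqref{eq:cond6}, invoke Theorem~\ref{th:sps}, and read off the bound on the $\mathbf{x}$-component.
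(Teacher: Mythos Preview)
Your proposal is correct and follows essentially the same approach as the paper: identify $\tilde{\mathbf{w}}^t$ as the slow error state and $\tilde{\mathbf{r}}^t$ as the fast error state, cite Lemmas~\ref{prop:u} and~\ref{prop:w} for conditions \eqref{eq:cond1}--\eqref{eq:cond6}, verify the equilibrium condition \eqref{eq:eq_cond} via $\mathbf{r}_{\mathsf{eq}}(\mathbf{w})=\mathbf{h}(\mathbf{w})$, invoke Theorem~\ref{th:sps}, and extract the $\mathbf{x}$-component of the resulting bound. Your treatment is in fact more careful than the paper's terse proof in checking the Lipschitz hypotheses and in flagging the bookkeeping issue that the $\mathbf{z}$-block of $\mathbf{Q}$ is not overtly of the form $\mathbf{I}+\gamma(\cdot)$, which the paper handles only implicitly through Lemma~\ref{prop:w}.
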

\begin{proof}
Let $\tilde{\mathbf{w}}^t$ correspond to the error state of the slow dynamics in \eqref{eq:slow} and $\tilde{\mathbf{r}}^t$ the error state of the fast error dynamics in \eqref{eq:fast}. According to Lemmas \ref{prop:u} and \ref{prop:w}, the conditions \eqref{eq:cond1}-\eqref{eq:cond6} hold for the error dynamics of $\tilde{\mathbf{w}}^t$ and $\tilde{\mathbf{r}}^t$  respectively. Besides, from the definition of \blue{$\mathbf{r}_{\mathsf{eq}}(\mathbf{w}^t)$} it can be verified that condition \eqref{eq:eq_cond} holds. Then, Theorem \ref{th:sps} can be invoked to prove the global exponential stability of $\tilde{\mathbf{w}}^t$ and $\tilde{\mathbf{r}}^t$. By changing the coordinates from $\tilde{\mathbf{w}}^t$ and $\tilde{\mathbf{r}}^t$ to ${\mathbf{w}}^t$ and ${\mathbf{r}}^t$, the proof ends.  
\end{proof}

Proposition \ref{prop:ges} establishes that Algorithm \ref{al:accelerated} linearly converges to the global optimum $\mathbf{x}^*$. Nevertheless, acceleration is not proved yet. This is proved in the next theorem. 

\begin{theorem}\label{prop:rate}
    Consider $\rho >0$, $\alpha \in (0,1)$ and $\lambda \in (1,2)$, \blue{and $\gamma > 0$ sufficiently small}. Also, let $\beta_1$ and $\beta_2$ be the convergence rate of the dynamics of $\mathbf{z}^t$ for Algorithm \ref{al:original} and \ref{al:accelerated} respectively. Then,  Algorithm \ref{al:accelerated} always converges faster than Algorithm \ref{al:original} ($\beta_2 < \beta_1$) for any $\epsilon \in (0,1)$ and $\mu \in (1,2)$.
\end{theorem}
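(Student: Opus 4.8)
The plan is to make both convergence rates explicit from the eigenvalue computation already carried out inside the proof of Lemma~\ref{prop:w}, and then to compare them. For Algorithm~\ref{al:accelerated}, the $\mathbf{z}^t$-error dynamics is precisely \eqref{eq:z_error_dynamics}, so $\beta_2$ is the spectral radius of $\mathbf{L}$ on the subspace on which that error has to decay (the complement of the subspace along which $\mathbf{z}^{*}$ is genuinely free, which is shared by the two algorithms and irrelevant for $\mathbf{x}^t\to\mathbf{x}^{*}$). Splitting $\mathbb{R}^{2nd}$ into the $+1$ and $-1$ eigenspaces of the involution $\mathbf{P}$, Lemma~\ref{prop:w} already isolates these eigenvalues: on the $-1$-eigenspace they are the roots $\{1,\mu-1\}$ of \eqref{eq:eigen_expression_2}, and on the $+1$-eigenspace they are the two roots of the quadratic \eqref{eq:eigen_expression_3}, $\sigma^{2}-\mu(1-2\epsilon\alpha)\sigma+(\mu-1)=0$. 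After discarding the structural root at $1$ this gives $\beta_2=\max\{|\mu-1|,\varrho(\mu,\epsilon,\alpha)\}$, with $\varrho$ the larger modulus of the roots of \eqref{eq:eigen_expression_3}. Setting $\mu=\epsilon=1$, which is exactly the specialisation turning Algorithm~\ref{al:accelerated} into Algorithm~\ref{al:original}, collapses \eqref{eq:eigen_expression_3} to $\{0,1-2\alpha\}$ and \eqref{eq:eigen_expression_2} to $\{0,1\}$: here the momentum has nothing to act on, the root on the $-1$-eigenspace stays at $1$, and no admissible $\alpha\in(0,1)$ can contract it. This is the bottleneck announced in Section~\ref{sec:proposal}, so $\beta_1=1$.

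With this identification, $\beta_2<\beta_1$ becomes the uniform bound $\beta_2<1$ for all $\epsilon\in(0,1)$, $\mu\in(1,2)$, which is essentially repackaging \eqref{eq:eigen_expression_2}--\eqref{eq:eigen_expression_4}. First $|\mu-1|=\mu-1\in(0,1)$ since $\mu\in(1,2)$. For the roots of \eqref{eq:eigen_expression_3} I would split on the sign of the discriminant $\Delta=\mu^{2}(1-2\epsilon\alpha)^{2}-4(\mu-1)$: if $\Delta<0$ the two roots are complex conjugate with common modulus $\sqrt{\mu-1}<1$; if $\Delta\ge 0$ the roots are real with product $\mu-1>0$, and the four inequalities in \eqref{eq:eigen_expression_4} place them strictly in $(-1,1)$, the binding condition being $\epsilon\alpha\in(0,1)$, which holds because $\alpha\in(0,1)$ and $\epsilon\in(0,1)$. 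Taking the maximum over the (at most) three relevant eigenvalues gives $\beta_2<1=\beta_1$; the inequality is strict on the whole open parameter box, collapsing to equality only in the degenerate limits $(\mu,\epsilon)\to(1,1)$ (Algorithm~\ref{al:accelerated}$\to$Algorithm~\ref{al:original}) and $\mu\to 2$ (the momentum map loses contraction).

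The routine quadratic-root bookkeeping is not the hard part; the main obstacle is making rigorous the identification $\beta_1=1$, i.e.\ arguing that the unit-modulus mode of the non-accelerated $\mathbf{z}^t$-iteration is a genuine barrier to a faster $\mathbf{z}$-contraction and not merely the normalisation freedom of $\mathbf{z}^{*}$. Concretely I would show that the eigenvector attached to that mode does not lie in the kernel of $\mathbf{I}$ minus the closed-loop $\mathbf{z}$-operator of Algorithm~\ref{al:original}, so the $-1$-eigenspace component of $\mathbf{z}^t$ in Algorithm~\ref{al:original} can only converge at the slow, $\gamma$-dependent rate inherited through the interconnection with $\mathbf{x}^t$ via $\mathbf{y}^t,\mathbf{s}^t$, and cannot be sped up by tuning $\alpha$ or $\rho$; whereas in Algorithm~\ref{al:accelerated} the extra state $\mathbf{z}^{t-1}$ and the parameter $\mu$ displace precisely that mode to $\mu-1<1$. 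This is also where the decoupling \eqref{eq:z_error_dynamics} established in Lemma~\ref{prop:w} does the essential work, since it is what lets $\beta_2$ be defined intrinsically for the $\mathbf{z}$-subsystem, independently of $\gamma$.
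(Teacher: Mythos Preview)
Your route is essentially the paper's: read $\beta_1,\beta_2$ off the spectrum of $\mathbf L$ through the two quadratics \eqref{eq:eigen_expression_2}--\eqref{eq:eigen_expression_3} from Lemma~\ref{prop:w}, specialise $\mu=\epsilon=1$ for Algorithm~\ref{al:original}, and compare. You are in fact sharper than the paper in one place: the paper asserts $\bigl|\tfrac{\mu}{2}+\sqrt{\tfrac{\mu^{2}}{4}+(1-\mu)}\bigr|<1$, whereas you factor \eqref{eq:eigen_expression_2} as $(\sigma-1)(\sigma-(\mu-1))=0$ and correctly see that this expression equals~$1$; any strict inequality has to come from discarding that root.

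The obstacle you single out is the one real gap, and the paper does not close it either---it is absorbed into the sentence ``under the assumption that no integral term is present, $\beta_1=|1-2\alpha|$; otherwise, $\beta_1=1$.'' Be careful with your proposed asymmetric treatment, though: the $-1$-eigenspace of $\mathbf P$ is the \emph{same} subspace in both algorithms, and on it the open-loop $\mathbf z$-operator of Algorithm~\ref{al:original} is also the identity, so at the level of the decoupled $\tilde{\mathbf z}$-dynamics that mode is just as structural for $\beta_1$ as for $\beta_2$. If you discard it consistently you get $\beta_1=|1-2\alpha|$, and then $\beta_2<\beta_1$ can fail (take $\alpha=\tfrac12$, $\mu$ near $2$: $\beta_1=0$ while $\beta_2\ge\mu-1>0$). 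Your last-paragraph plan to pass to the \emph{closed-loop} $\mathbf z$-operator (with the feedback through $\mathbf y,\mathbf s$) is a different object from the $\mathbf L$ in \eqref{eq:z_error_dynamics} and might in principle separate the two cases, but then you would need a fresh spectral analysis for that operator rather than just quoting Lemma~\ref{prop:w}; the paper's own proof does not take that route.
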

\begin{proof}
    In Lemma \ref{prop:w} Eq. \eqref{eq:z_error_dynamics} it is proven that the dynamics of $\tilde{\mathbf{z}}^t$ are \blue{determined by $\mathbf{L}$}. Therefore, the convergence rate of $\tilde{\mathbf{z}}^t$ and subsequently ${\mathbf{z}}^t$ is given by $\max(\{\blue{|}\sigma_l(\mathbf{L})\blue{|}\}_{l=1}^{4nd})$. Therefore, we have that 
    $$
    \begin{aligned}
        \max(\{\blue{|}\sigma_l(\mathbf{L})\blue{|}&\}_{l=1}^{4nd}) = \max \left(\blue{\Bigg|}\frac{\mu}{2} + \sqrt{\frac{\mu^2}{4} + (1 - \mu)}\blue{\Bigg|}\right., 
        \\
        &\left.\blue{\Bigg|}\frac{\mu(1 - 2\epsilon\alpha)}{2} + \sqrt{\frac{\mu^2}{4}(1-2\epsilon\alpha)^2 + (1 - \mu)}\blue{\Bigg|}\right).
    \end{aligned}
    $$
    Moreover, notice that Algorithm \ref{al:original} is Algorithm \ref{al:accelerated} with \mbox{$\mu=\epsilon=1$}. Thus, 
    $
    \beta_1 = \max(1, \blue{|}1-2\alpha\blue{|})
    $ for Algorithm \ref{al:original}. 
    Under the assumption that no integral term is present, \mbox{$\beta_1 = \blue{|}1 - 2\alpha\blue{|}$}; otherwise, $\beta_1 = 1$. On the other hand, under the constraints on $\mu, \alpha, \epsilon$ in Proposition \ref{prop:ges}, we have that $\blue{\Big|}\frac{\mu}{2} + \sqrt{\frac{\mu^2}{4} + (1 - \mu)}\blue{\Big|} < 1,$ and also \mbox{$\blue{\Big|}\frac{\mu(1 - 2\epsilon\alpha)}{2} + \sqrt{\frac{\mu^2}{4}(1-\epsilon\alpha)^2 + (1 - \mu)}\blue{\Big|} < \blue{|}1 - 2\alpha\blue{|}.$}
    Henceforth, \mbox{$\beta_2 < \beta_1$}.
\end{proof}

Proposition \ref{prop:ges} establishes that the slow and fast dynamics of Algorithm \ref{al:accelerated} must evolve at different time-scales to ensure convergence. Therefore, the convergence rate of the auxiliary variables determines the convergence speed of the optimum estimates. Thanks to adding momentum, $\gamma, \rho$ and $\alpha$ can be increased without violating the conditions of time-scale decoupling of singularly perturbed systems, improving the convergence speed.

\section{Illustrative example}\label{sec:experiments}
We compare our algorithm (\textsf{A2DMM-GT}) with the one in \cite{carnevale2023admm} (\textsf{ADMM-GT}), \blue{the accelerated gradient tracking approaches in \cite{xin2019distributed} (\textsf{AGT}) and \cite{nguyen2023accelerated} (\textsf{A2GT}),} and the older distributed gradient tracking protocol in \cite{nedic2017achieving} (\textsf{DIGing}). \blue{We consider two settings. First, a quadratic programming setup, that arises in robotic, smart grid or feature selection applications \cite{ubl2019totally}}. We set $\mathsf{N}=200$, $n=2$ and $f_i(\mathbf{x}) = \mathbf{x}^{\top}\mathbf{B}_i\mathbf{x} + \mathbf{a}_i \mathbf{x}$, where $\mathbf{B}_i$ is a positive definite matrix with uniformly random eigenvalues in the range $[1, 5]$ and $\mathbf{a}_i$ is a random vector with elements uniformly drawn in the range $[-10, 20]$. \blue{Note that $f_i(\mathbf{x})$ fulfills the assumptions in terms of strong convexity and Lipschitzness of the gradients.} Graph $\mathcal{G}$ is generated as a sparse \blue{undirected connected} proximity graph, where the nodes are randomly placed in a $2\times2$ square if and only if the new node is within $[0.1, 0.17]$ from an already placed node. \blue{The parameters
for the all the approaches are hand-tuned to achieve fastest convergence possible:} $\gamma=1.6$, $\alpha = 0.9924$, $\rho=3.028$, $\lambda=0.2$, $\mu = 1.6$, $\epsilon=0.6$ for \textsf{A2DMM-GT}; $\gamma=0.4865$, $\alpha = 0.8924$, $\rho=0.528$ for \textsf{ADMM-GT}; \blue{$\alpha=0.001$, $\beta = 0.9$ for \textsf{AGT} (we use the notation in \cite{xin2019distributed}); $\alpha=0.0008$, $\beta = 0.7$, $\gamma=0.2$ for \textsf{A2GT} (we use the notation in \cite{nguyen2023accelerated})}; $\gamma=0.0127$ for \textsf{DIGing}. We simulate $500$ steps. Convergence is assessed using the evolution of the error $\mathbf{e}^t = ||\mathbf{x}^t - \mathbf{1}_{\mathsf{N}, n} \mathbf{x}^*||$. 

\blue{The second setup is a logistic regression scenario where nodes cooperate to train a linear classifier. In this case, \mbox{$\mathsf{N}=50$}, where each node has a local dataset of $m=10$ one-dimensional points $p_i^j$ drawn from a normal distribution of mean $0$ and standard deviation $1$. The labels are also randomly drawn, such that $l_i^j \in \{-1, 1\}$. We define $\mathbf{x} = [q_1, q_2]$ and $f_i(\mathbf{x}) = \sum_{j=1}^m \log(1 + \exp^{-l_i^j(q_1 p_i^j + q_2)}) + C ||\mathbf{x}||^2$, with $C=1$ to ensure strong-convexity. The parameters for all methods are the same that in quadratic programming setup.}

\begin{figure}
    \centering
    \begin{tabular}{cc}
         
    \includegraphics[width=0.45\columnwidth]{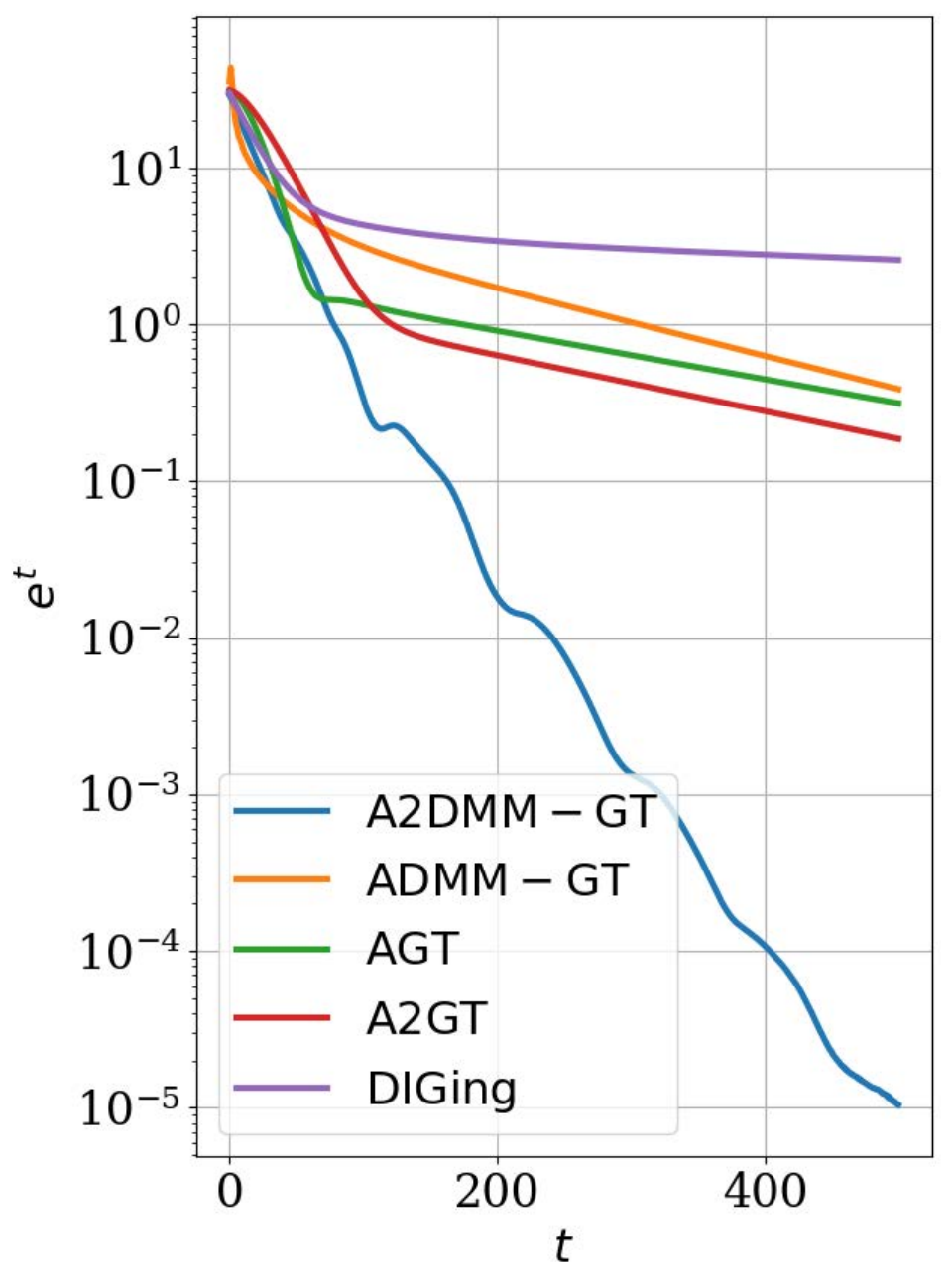} 
         &
         
    \includegraphics[width=0.45\columnwidth]{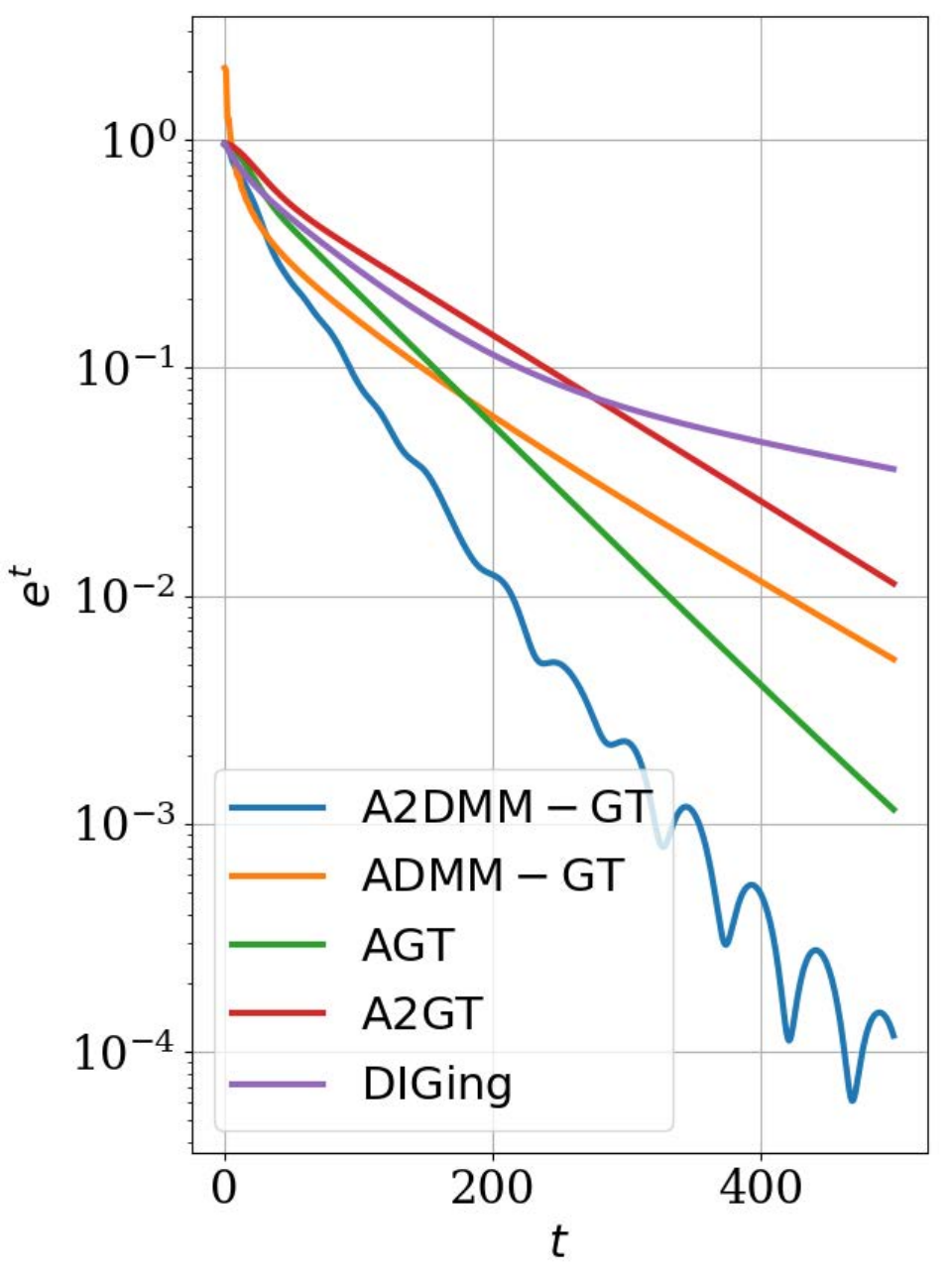} 
    \end{tabular}
    \caption{Evolution of the error with time for the five protocols under comparison. \blue{Left is the quadratic programming setup, right is the linear classification setup.}}
    \label{fig:error}
\end{figure}

The evolution of the error (Fig.~\ref{fig:error}) shows that \textsf{A2DMM-GT} \blue{is the fastest method among all the approaches for both numerical settings, with different network configurations and local costs.} Importantly, this is achieved without increasing the number of gradient computations per time step nor the communication burden, leading to a significant reduction in power consumption and total bandwidth usage in practical applications. \blue{This comes at the cost of increasing the memory burden compared to the non-accelerated methods. However, given current advances in MEMS technology, it is generally better to leverage memory against communication burden.}

\section{Conclusions}\label{sec:conclusions}
This letter presents a novel accelerated distributed optimization method for the problem of consensus optimization. The method combines the benefits of momentum to those from gradient tracking and ADMM algorithms, namely: fast convergence, simplicity of computation and robustness. From the theoretical point of view, we prove that, by adding momentum, we overcome the limitation on the convergence rate of the auxiliary variables in current ADMM-GT algorithms. From a practical perspective, we found that our algorithm is faster than the existing distributed first-order algorithms, while preserving their lightweight properties in terms of computation and communication burden. \blue{As future work, we aim at exploring the effect of adding momentum in the step size $\gamma$, the extension of the algorithm to time-varying directed graphs, and the extension to constrained settings.}

\bibliographystyle{IEEEtran}
\bibliography{biblio}

\end{document}